\theoremstyle{definition}
\newtheorem{thm}{Theorem}[section]
\newtheorem{cor}[thm]{Corollary}
\newtheorem{lem}[thm]{Lemma}
\newtheorem{rem}[thm]{Remark}
\newtheorem{pro}[thm]{Proposition}
\numberwithin{equation}{section}
\newcommand{\subjclass}[1]{\bigskip\noindent\emph{2010 Mathematics Subject Classification:}\enspace#1}
\newcommand{\keywords}[1]{\noindent\emph{Keywords:}\enspace#1}
\def\div{\hbox{div}}
\def\div{\hbox{div}}
\def\O{\Omega}
\def\i{\infty}
\def\e{\epsilon}
\def\d{\delta}
\def\v{\varphi}
\def\n{\nabla}
\def\lt{L^2(\O)}
\def\hn{(H^1(\O))^m}
\def\si{\sum_{i=1}^{m}}
\def\sj{\sum_{j=1}^{m}}
\begin{document}


\baselineskip=17pt


\title{Existence result for degenerate cross-diffusion system with application to seawater intrusion}

\author{Jana Alkhayal\\
LaMA-Liban, lebanese university, P.O. Box 37 Tripoli, Lebanon.\\
jana.khayal@hotmail.com\\
Samar Issa\\
LaMA-Liban, lebanese university, P.O. Box 37 Tripoli, Lebanon.\\
samar\texttt{\char`_}issa@live.com\\
Mustapha Jazar\\
LaMA-Liban, lebanese university, P.O. Box 37 Tripoli, Lebanon.\\
mjazar@laser-lb.org\\
R\'egis Monneau\\
Universit\'e Paris-Est, CERMICS, Ecole des Ponts ParisTech,\\ 6 et 8 avenue Blaise Pascal, Cit\'e Descartes Champs-sur-Marne,\\ 77455 Marne-la-vall\'ee Cedex 2, France.\\
monneau@cermics.enpc.fr
}

\date{}

\maketitle


\begin{abstract}
In this paper, we study degenerate parabolic system, which is strongly coupled. We prove general existence result, but the uniqueness remains an open question. Our proof of existence is based on a crucial entropy estimate which both control the gradient of the solution and the non-negativity  of the solution. Our system are of porous medium type and our method applies to models in seawater intrusion.

\subjclass{35K55, 35K65}

\keywords{Degenerate parabolic system; entropy estimate; porous medium like systems.}
\end{abstract}

\section{Introduction}

For the sake of simplicity, we will work on the torus $\Omega:=\mathbb T^N=\left(\mathbb R/\mathbb Z\right)^N$, with $N\geq 1$.\\
Let $\Omega_T:=(0,T)\times\O$ with $T>0$. Let an integer $m\geq 1$. Our purpose is to study a class of degenerate strongly coupled parabolic system of the form\\
\begin{equation}\label{sys0.1}
u_t^i= {\rm div}\left( u^i\overset{m}{\underset{j=1}{\sum}} A_{ij} \n u^j\right)\quad\quad\mbox{ in } \Omega_T, \quad\mbox{ for }i=1,\ldots, m.
\end{equation}
\noindent with the initial condition 
\begin{equation}\label{init1}
u^i(0,x)=u^i_0(x)\geq 0 \quad\quad\mbox{ a.e. in }\O,\quad\mbox{ for } i=1,\ldots,m.
\end{equation}
In the core of the paper we will assume that $A=(A_{ij})_{1\leq i,j\leq m}$ is a real $m\times m$ matrix (not necessarily symmetric) that satisfies the following positivity condition: we assume that there exists $\delta_0>0$, such that we have
\begin{equation}\label{eq+}
\xi^TA\, \xi \geq \delta_0|\xi|^2,\quad\mbox{ for all } \xi \in \mathbb R^m. 
\end{equation}
This condition can be weaken: see Subsection \ref{generalisation}.
Problem (\ref{sys0.1}) appears naturally in the modeling of seawater intrusion (see Subsection 1.2).
\subsection{Main results}
To introduce our main result, we need to define the nonnegative entropy function $\Psi$:\\
\begin{eqnarray}\label{Psi}
\Psi(a)-\frac{1}{e}=
\left\{\begin{array}{cl}
a\ln a&\mbox{ for}\quad a>0,\\
0&\mbox{ for}\quad a=0,\\
+\infty&\mbox{ for}\quad a<0,
\end{array}\right.
\end{eqnarray}
which is minimal for $\displaystyle a=\frac{1}{e}$.
\begin{thm}\textbf{(Existence for system (\ref{sys0.1}))}\label{th0.1}\\
Assume that $A$ satisfies (\ref{eq+}). For $i=1,\ldots,m$, let $u_0^i\ge 0$ in $\O$ satisfying 
\begin{equation}\label{psi0}
\sum_{i=1}^m\int_{\O}\Psi(u_0^i)<+ \infty,
\end{equation}
where $\Psi$ is given in (\ref{Psi}). Then there exists a function $u=(u^i)_{1\le i\le m} \in (L^2(0,T;H^1(\Omega))\ \cap\ C([0,T);(W^{1,\infty}(\O))'))^m$ solution in the sense of distributions of (\ref{sys0.1}),(\ref{init1}), with $u^i\geq 0$ a.e. in $\O_T$, for $i=1,\ldots,m$. Moreover this solution satisfies the following entropy estimate for a.e. $t_1, t_2\in (0,T)$, with $u^i(t_2)=u^i(t_2,\cdot)$:
\begin{equation}\label{entro}
\sum_{i=1}^m\int_{\O}\Psi(u^i(t_2))+\d_0\sum_{i=1}^m\int_{t_1}^{t_2}\int_{\O}|\nabla u^i|^2\le\sum_{i=1}^m\int_{\O}\Psi(u_0^i),
\end{equation}
where $\Psi$ is given in (\ref{Psi}).
\end{thm}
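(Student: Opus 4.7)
The plan is to first derive the entropy estimate formally, take it as the a priori bound for a regularized family, and pass to the limit. Formally, testing the $i$-th equation by $\Psi'(u^i)=\ln u^i+1$ and integrating by parts on the torus gives
\[
\frac{d}{dt}\sum_i\int_\O\Psi(u^i)=-\int_\O\sum_{i,j}A_{ij}\n u^i\cdot\n u^j\le -\d_0\sum_i\int_\O|\n u^i|^2,
\]
because $\Psi''(a)=1/a$ exactly cancels the degenerate factor $u^i$ in the flux, and (\ref{eq+}) applied componentwise to each spatial derivative of $u=(u^i)_i$ gives the last inequality. Time integration yields~(\ref{entro}); nonnegativity is then automatic from the finiteness of $\sum_i\int_\O\Psi(u^i(t))$, since $\Psi=+\i$ on $(-\i,0)$.

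\textbf{Approximation.} To make this rigorous I would regularize the system so that it becomes uniformly parabolic. A natural candidate is
\[
u_t^{i,\varepsilon}=\div\!\left((u^{i,\varepsilon}_{+}+\varepsilon)\sum_j A_{ij}\n u^{j,\varepsilon}\right)+\varepsilon\Delta u^{i,\varepsilon},\qquad u^{i,\varepsilon}(0,\cdot)=u^i_{0,\varepsilon},
\]
with smooth, strictly positive initial data $u^i_{0,\varepsilon}$ approximating $u^i_0$ in $L^1$ and along the entropy. For each fixed $\varepsilon>0$ this system can be solved by a Galerkin scheme or by a Schauder fixed-point argument applied to the map $v\mapsto u^\varepsilon$, where $u^\varepsilon$ solves the linear non-degenerate system obtained by freezing the unknown inside the flux at $v$.

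\textbf{Uniform bounds and limit.} At the regularized level the entropy calculation can be reproduced rigorously using $\ln(u^{i,\varepsilon}_{+}+\varepsilon)$ (or a smoothed version of $\Psi'$) as test function. This yields bounds on the entropy and on $\sum_i\|\n u^{i,\varepsilon}\|_{L^2(\O_T)}$ that are uniform in $\varepsilon$. Reading the equation then gives a uniform bound on $u^{i,\varepsilon}_t$ in $L^2(0,T;(W^{1,\i}(\O))')$. Aubin--Lions extracts a subsequence converging strongly in $L^2(\O_T)$ (hence a.e.) and weakly in $L^2(0,T;H^1(\O))$, which is enough to identify the weak limit of $(u^{i,\varepsilon}_{+}+\varepsilon)\sum_j A_{ij}\n u^{j,\varepsilon}$ as $u^i\sum_j A_{ij}\n u^j$. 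The entropy estimate (\ref{entro}) then passes to the limit by weak lower semicontinuity of $\int|\n u^i|^2$ and of $\int\Psi$, and $u^i\ge 0$ follows from $\int_\O\Psi(u^i(t))<+\i$ for a.e.\ $t$.

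\textbf{Main obstacle.} The delicate part is choosing the approximation so that uniform parabolicity (for existence at fixed $\varepsilon$), admissibility of the entropy test function (for a uniform estimate), and sufficient compactness to identify the degenerate product $u^i\n u^j$ in the limit all coexist. The fact that $A$ need only satisfy the one-sided positivity (\ref{eq+}) and is not required to be symmetric is handled structurally by the entropy method: testing each equation by $\ln u^i$ and summing makes only the symmetric part of $A$ appear in the dissipation, which is exactly why (\ref{eq+}) is the right hypothesis.
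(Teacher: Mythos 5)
Your overall strategy (formal entropy identity, regularize, uniform entropy bound, compactness, pass to the limit) is exactly the strategy of the paper, and your formal computation of (\ref{entro}) is correct. But the two steps you delegate to "a Galerkin scheme or Schauder" and to "a smoothed version of $\Psi'$" are precisely where the work is, and the specific regularization you propose breaks down at both of them. First, existence for your $\varepsilon$-system: the frozen-coefficient linear system $u^i-\Delta t\,\div\left((v^i_++\varepsilon)\sum_j A_{ij}\n u^j\right)-\Delta t\,\varepsilon\Delta u^i=f^i$ has coefficients $(v^i_++\varepsilon)$ that are unbounded for $v\in L^2$, and, more seriously, its bilinear form is not coercive: the off-diagonal contribution $\sum_{i,j}\int (v^i_++\varepsilon)A_{ij}\n u^j\cdot\n u^i$ is not controlled by (\ref{eq+}) because the weight $v^i_+$ depends on $i$ and on $x$, so the quadratic form $\xi^TA\xi$ never appears. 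This is why the paper truncates the coefficient from above \emph{and} below by $T^{\e,\ell}$ and, crucially, mollifies the gradient ($\n\rho_\eta\star\rho_\eta\star u^j$): the mollification lets the cross term be absorbed into the $L^2$ term via $\|\n\rho_\eta\star\rho_\eta\star u\|_{L^2}\le C_0\eta^{-1}\|u\|_{L^2}$, at the price of a time-step restriction $\Delta t<\tau$ (Proposition \ref{theo1}).

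Second, the rigorous entropy estimate. Testing your equation with $\ln(u^{i,\varepsilon}_++\varepsilon)$ gives $\n\ln(u^i_++\varepsilon)=\chi_{\{u^i>0\}}\n u^i/(u^i_++\varepsilon)$, so the cancellation with the coefficient $(u^i_++\varepsilon)$ produces $\sum_{i,j}\int A_{ij}\n u^j\cdot\n u^i_+$, which is \emph{not} a nonnegative quadratic form of the type (\ref{eq+}) unless you already know $u^{i,\varepsilon}\ge0$ — and nonnegativity is exactly what you are trying to extract from the entropy bound, so the argument is circular. (Positivity is not propagated by a maximum principle here because of the strong coupling.) The paper resolves this by building $\Psi_{\e,\ell}$ with $\Psi_{\e,\ell}''=1/T^{\e,\ell}$ on all of $\mathbb R$ (quadratic extension below $\e$ and above $\ell$), so the cancellation $T^{\e,\ell}(u^i)\,\Psi_{\e,\ell}''(u^i)=1$ is exact for sign-changing $u^i$, the dissipation becomes $\sum_{i,j}\int A_{ij}(\n\rho_\eta\star u^i)\cdot(\n\rho_\eta\star u^j)\ge\d_0\sum_i\int|\n\rho_\eta\star u^i|^2$, and nonnegativity of the limit is recovered from the blow-up of $\Psi_{\e,\ell}$ below $0$ as $\e\to0$ (Step 7 of Proposition \ref{th3}). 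A smaller inaccuracy: the flux $u^i\sum_jA_{ij}\n u^j$ is only in $L^1$ in time with values in $L^1(\O)$ (or $L^q$ with some $q>1$ after interpolation), not $L^2$, so $u^{i}_t$ is not bounded in $L^2(0,T;(W^{1,\infty})')$ and classical Aubin--Lions does not apply directly; one needs Simon's lemma with an $L^p(0,T;L^2)$ bound, $p>2$, obtained by interpolating $L^\infty_tL^1_x$ with $L^2_tH^1_x$, together with a BV-in-time bound for the discrete scheme (Theorem \ref{lemsimonvar}). So the route is the paper's route, but the proposal as written is missing the ideas that make each stage actually work.
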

Here $\left\|A\right\|$ is the matrix norm defined as 
\begin{equation}\label{norme}
\left\|A\right\|=\sup_{\left|\xi\right|=1}\left|A\xi\right|.
\end{equation}
Notice that the entropy estimate (\ref{entro}) guarantees that $\n u^i \in L^2(0,T;L^2(\O))$, and therefore allows us to define the product $\displaystyle u^i\sum_{i=1}^mA_{ij}\n u^j$ in (\ref{sys0.1}). When our proofs were obtained, we realized that a similar entropy estimate has been obtained in \cite{CJ3} and \cite{CPZ} for a special system different from ours.
\begin{rem}(\textbf{Decreasing energy})\\
If $A$ is a symmetric matrix then a solution $u$ of system (\ref{sys0.1}) satisfies
$$\frac{d}{dt}\left(\si\sj\int_\O\frac{1}{2} A_{ij}u^iu^j\right)=-\si\int_\O u^i\left|\sj A_{ij}\n u^j\right|^2.$$
\end{rem}
\subsection{Application to seawater intrusion}
In this subsection, we describe briefly a model of seawater intrusion, which is particular case of our system (\ref{sys0.1}).\\
An aquifer is an underground layer of a porous and permeable rock through which water can move. On the one hand coastal aquifers contain freshwater and on the other hand saltwater from the sea can enter in the ground and replace the freshwater. We refer to \cite{BCSOH} for a general overview on seawater intrusion models.\\Now let $\nu=1-\e_0\in(0,1)$ where $$\e_0=\frac{\gamma_s-\gamma_f}{\gamma_s}$$ with $\gamma_s$ and $\gamma_f$ are the specific weight of the saltwater and freshwater respectively.\\


\begin{figure}[htbp]
\centerline{
\epsfig{figure=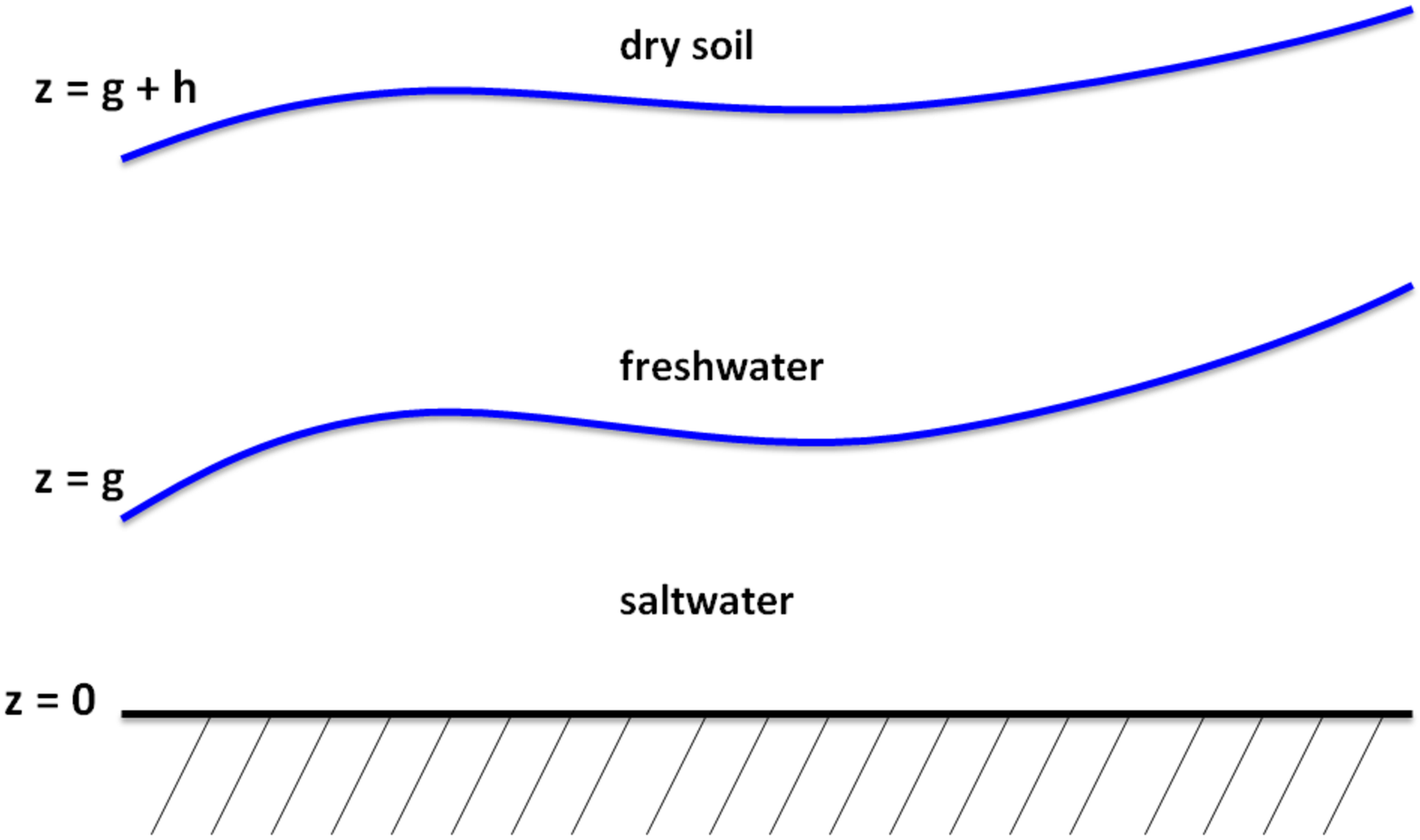, scale=0.4}
}
\label{fig:unconfined}\caption{Seawater intrusion in coastal aquifer}

\end{figure}

\noindent We assume that in the porous medium, the interface between the saltwater and the bedrock is given as $\{z=0\}$, the interface between the saltwater and the freshwater, which are assumed to be unmiscible, can be written as $\left\{z=g(t,x)\right\}$, and the interface between the freshwater and the dry soil can be written as $\left\{z=h(t,x)+g(t,x)\right\}$. Then the evolutions of $h$ and $g$ are given by a coupled nonlinear parabolic system (we refer to see \cite{RJ}) of the form
\begin{eqnarray}\label{uncon}
\left\{\begin{array}{llll}
h_t&=& {\rm div}\left\{ h\n(\nu( h+g)) \right\}  &\mbox{ in } \O_T,\\
g_t&=& {\rm div}\left\{g\n(\nu h+g) \right\} &\mbox{ in } \O_T,\\
\end{array}\right.
\end{eqnarray}
This is a particular case of (\ref{sys0.1}), where the $2\times 2$ matrix
\begin{equation}\label{A'}
A=
\begin{pmatrix} 
\nu & \nu \\
\nu & 1 
\end{pmatrix}
\end{equation}
satisfies (\ref{eq+}).
\subsection{Brief review of the litterature} 
The cross-diffusion systems, in particular the strongly coupled ones (for which the equations are coupled in the highest derivatives terms), are widely presented in different domains such as biology, chemistry, ecology, fluid mechanics and others. They are difficult to treat. Many of the standard results cannot be applied for such problems, such as the maximum principle. Hereafter, we cite several models where our method applies for most of them (see Section \ref{Gen} for more generalizations on our problem).

In \cite{SKT}, Shigesada, Kawasaki and Teramoto proposed a two-species SKT model in one-dimensional space which arises in population dynamics. It can be written in a generalized form with m-species as
\begin{equation}\label{population}
\displaystyle u_t^i-\Delta
\left[\left(\beta_i+\sum_{j=1}^{m}\alpha_{ij}u^j\right)u^i\right]
= \displaystyle\left(a_i-\sum_{j=1}^{m}b_{ij}u^j\right)u^i,\quad\mbox{ in }\O\times(0,T),
\end{equation}
where $u^i$, for $i=1,\ldots,m$, denotes the population density of the i-th species and $\beta_i$, $\alpha_{ij}$, $a_i$, $b_{ij}$ are nonnegative constants. The existence of a global solution for such problem in arbitrary space dimension is studied in \cite{WF}, where the quadratic form of the diffusion matrix is supposed positive definite. On the other hand, the two-species case was frequently studied, see for instance \cite{LNW,K,Y,GG,S} for dimensions $1$, $2$, and \cite{CJ3,PT,R,CJ2} for arbitrary dimension and appropriate conditions.

Another example of such problems is the electochemistry model studied by Choi, Huan and Lui in \cite{CHL} where they consider the general form
\begin{equation}\label{electro}
u^i_t=\sum_{\ell=1}^n \sj \frac{\partial}{\partial x_\ell}\left(a^{ij}_\ell(u)\frac{\partial u^j}{\partial x_\ell}\right),\quad u=(u^i)_{1\leq i\leq m}\quad\mbox{for}\quad i=1,\ldots,m, 
\end{equation}
and prove the existence of a weak solution of (\ref{electro}) under assumptions on the matrices $A_l(u)=(a_l^{ij}(u))_{1\leq i,j\leq m}$: it is continuous in $u$, its components are uniformly bounded with respect to $u$ and {its symmetric part} is definite positive. Their strategy of proof seeks to use Galerkin method to prove the existence of solutions to the linearized system and then to apply Schauder fixed-point theorem. Then they apply the results obtained to an electrochemistry model.

A third example of cross-diffusion models is the chemotaxis model introduced in \cite{LS}. The global existence for classical solutions of this model is studied by Hillen and Painter in \cite{HP} where they considered
\begin{eqnarray*}\label{chemotaxis1}
\left\{\begin{array}{clll}
u_t&=&\n\cdot(\n u- \chi(u,v)\n v),&\quad t>0, x\in\O\\
v_t&=&\mu \Delta v+ g(u,v),&\quad t>0, x\in\O,
\end{array}\right.
\end{eqnarray*}
on a $C^3$- differentiable compact Riemannian manifold without boundary, where the function $u$ describes the particle density, $v$ is the density of the external signal, the chemotactic cross-diffusion $\chi$ is assumed to be bounded, and the function $g$ describes production and degradation of the external stimulus. Another kind of chemotaxis model (the angiogenesis system) has been suggested and studied in \cite{CPZ}:
\begin{eqnarray*}\label{chemotaxis2}
\left\{\begin{array}{clll}
u_t&=& \kappa\Delta u-\nabla\cdot( u\,\chi(v) \n v),&\quad t>0, x\in\O\\
v_t&=&-v^m u,&\quad t>0, x\in\O,
\end{array}\right.
\end{eqnarray*}
where $m\geq 1$ and $\kappa$ is a constant.

Moreover, Alt and Luckhaus prove the existence in finite time of a solution for the following elliptic-parabolic problem
\begin{equation}\label{Alt-Luck}
\partial_t b^i(u)-\div (a^i(b(u),\nabla u))=f^i(b(u)),\quad\mbox{ in }\O\times(0,T),
\end{equation}
where $\O\subset R^N$ is open, bounded, and connected with Lipschitz boundary, $b$ is monotone and continuous gradient and $a$ is continuous and elliptic with some growth condition. This problem can be seen as a standard parabolic equation when $b(z)=z$.

Another problem is the Muskat Problem for Thin Fluid Layers of the form
\begin{eqnarray*}\label{Muskat}
\left\{\begin{array}{cll}
\partial_t f&=&(1+R)\partial_x(f\partial_x f)+R\partial_x(f\partial_x g),\\
\partial_t g&=&R_\mu\partial_x(g\partial_x f)+R_{\mu}\partial_x(g\partial_x g).\\
\end{array}\right.
\end{eqnarray*}
It models, \cite{EMM}, the motion of two fluids with different densities and viscosities in a porous meduim in one dimension, where $f$ and $g$ are the thickness of the two fluids and $R$, $R_{\mu}>0$ depending on the densities and the viscosities of the fluids. The authors in \cite{EMM,EM} studied the classical solutions of such problem. Moreover, weak solutions are established under different assumptions in \cite{ELM,M,LM1,LM2}.
\subsection{Strategy of the proof}\label{strategy}
In (\ref{sys0.1}), the elliptic part of the equation does not have a Lax-Milgram structure. Otherwise, our existence result is mainly based on the entropy estimate (\ref{entro}). It is difficult to get this entropy estimate directly (we do not have enough regularity to do it), so we proceed by approximations.\\
\textbf{Approximation 1:}\\
We discretize in time system (\ref{sys0.1}), with a time step $\Delta t=T/K$, where $K\in\mathbb N^*$. Then for a given $u^{n}=(u^{i,n})_{1\leq i\leq m}\in (H^1(\O))^m$, we consider the implicit scheme which is an elliptic system:
\begin{equation}\label{sys2'}
\displaystyle{\frac{u^{i,n+1}-u^{i,n}}{\Delta t}} = {\rm div} \left\{u^{i,n+1}\sum_{j=1}^m A_{ij}\nabla u^{j,n+1}\right\}.
\end{equation}
\textbf{Approximation 2:}\\
We regularize the right-hand term of (\ref{sys2'}).
To do that, we take $\eta>0$ and $0<\e<1<\ell$, and we choose the following regularization 
\begin{equation}\label{reg}
\displaystyle{\frac{u^{i,n+1}-u^{i,n}}{\Delta t}} = {\rm div}\left\{T^{\e,\ell}(u^{i,n+1})\sum_{j=1}^m A_{ij}\n \rho_\eta\star\rho_\eta\star u^{j,n+1}\right\},
\end{equation}
where $T^{\e,\ell}$ is truncation operator defined as
\begin{eqnarray}
T^{\e,\ell}(a):=
\left\{\begin{array}{cll}\label{T}
\e &\mbox{ if }a\leq \e,\\
a &\mbox{ if }\e\leq a\leq \ell,\\
\ell &\mbox{ if }a\geq \ell,
\end{array}\right.
\end{eqnarray}
and the mollifier $\rho_\eta(x)=\eta^{-N}\rho\left(x/\eta\right)$ with $\rho \in C^\infty_c(\mathbb R^N)$, $\rho\geq 0$, $\int_{\mathbb R^N} \rho=1$ and $\rho(-x)=\rho(x)$.\\
Now, with the convolution by $\rho_\eta$ in (\ref{reg}), the term $\n \rho_\eta\star\rho_\eta\star u^{j,n+1}$ behaves like $u^{j,n+1}$. \\
Note that, considering the $\mathbb Z^N$- periodic extension on $\mathbb R^N$ of $u^{j,n+1}$, the convolution $\rho_\eta\star u^{j,n+1}$ is possible over $\mathbb R^N$.\\
\textbf{Approximation 3:}\\
Let $\delta >0$. We will add a second order term like $\delta\Delta u^i$ to equation (\ref{reg}) in order to obtain an elliptic one. More specifically, we consider ${\rm div}\left(\delta T^{\e,\ell}(u^i) \n u^i\right)$ instead of $\delta \Delta u^i$, to  keep an entropy estimate.\\
Then we freeze the coefficients $u^{i,n+1}$ on the right-hand side to make a linear structure (these coefficients are now called $\d T^{\e,\ell}(v^{i,n+1})$), we obtain the following modified system:
\begin{equation}\label{sys1}
\displaystyle{\frac{u^{i,n+1}-u^{i,n}}{\Delta t}}={\rm div}\left\{T^{\epsilon,\ell}(v^{i,n+1})\left(\overset{m}{\underset{j=1}{\sum}} A_{ij}\nabla\rho_\eta\star\rho_\eta\star u^{j,n+1}+\delta\nabla u^{i,n+1}\right)\right\}.
\end{equation}
We will look for fixed points solutions $v^{i,n+1}=u^{i,n+1}$ of this modified system. Finally, we will recover the expected result dropping one after one all the approximations.
\subsection{Organization of the paper}
In Section 2, we recall some useful tools. In Section 3, we study system (\ref{sys0.1}). By discretizing our problem on time, in Subsection 3.1, we obtain an elliptic problem. We use the Lax-Milgram theorem to show the existence of a unique solution to the linear problem (\ref{sys1}). We demonstrate, in Subsection 3.2, the existence of a solution of the nonlinear problem, using the Schaefer's fixed point theorem.\\Then we pass to the limit in the following order:
$(\Delta t,\e)\rightarrow (0,0)$ in Subsection 3.3, $(\ell,\eta)\rightarrow (\infty,0)$ in Subsection 3.4 and $\delta\rightarrow 0$ in Subsection 3.5.
\noindent Generalizations (including more general matrices $A$ or tensors) will be presented in Section 4. We end with an Appendix showing some technical results in Section 5.

\section{Preliminary tools }
\begin{thm}\label{schaefer}{\bf (Schaefer's fixed point theorem)}\cite[Theorem 4 page 504]{Evans}\\
Let $X$ be a real Banach space. Suppose that
$$\Phi : X\to X$$
is a continuous and compact mapping. Assume further that the set
$$\left\{u\in X,\quad u=\lambda \Phi(u) \quad \mbox{for some}\quad \lambda
  \in [0,1] \right\}$$
is bounded. Then $\Phi$ has a fixed point.
\end{thm}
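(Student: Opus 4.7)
The plan is to reduce the statement to Schauder's fixed point theorem, which guarantees that every continuous, compact self-map of a closed, bounded, convex subset of a Banach space has a fixed point. Since $\Phi$ need not send any ball into itself, the idea is to \emph{retract} $\Phi$ onto a sufficiently large closed ball and then use the boundedness hypothesis on the one-parameter family $\{u=\lambda\Phi(u)\}$ to rule out fixed points that live on the boundary of the ball.

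First, by hypothesis there is a constant $M>0$ such that every $u\in X$ satisfying $u=\lambda\Phi(u)$ for some $\lambda\in[0,1]$ obeys $\|u\|\le M$. I would fix any radius $R>M$ and introduce the closed ball $B_R=\{u\in X:\|u\|\le R\}$, which is closed, bounded, and convex. Define the radial retraction $r:X\to B_R$ by $r(v)=v$ if $\|v\|\le R$ and $r(v)=Rv/\|v\|$ otherwise; this map is continuous with $\|r(v)\|\le \|v\|$. Set $\Psi:=r\circ \Phi:B_R\to B_R$. Since $\Phi$ is continuous and compact and $r$ is continuous, $\Psi$ is a continuous compact self-map of $B_R$.

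Second, I would apply Schauder's fixed point theorem to $\Psi$ on $B_R$ to produce $u^\star\in B_R$ with $u^\star=\Psi(u^\star)=r(\Phi(u^\star))$. Two cases arise. If $\|\Phi(u^\star)\|\le R$, then $r$ acts as the identity and $u^\star=\Phi(u^\star)$, and the theorem is proved. If on the contrary $\|\Phi(u^\star)\|>R$, then
\[
u^\star=\frac{R}{\|\Phi(u^\star)\|}\,\Phi(u^\star)=\lambda\,\Phi(u^\star),\qquad \lambda:=\frac{R}{\|\Phi(u^\star)\|}\in[0,1),
\]
so $u^\star$ belongs to the bounded set of the hypothesis, hence $\|u^\star\|\le M<R$. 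On the other hand, by construction of $r$ one has $\|u^\star\|=R$ in this case, a contradiction. Hence only the first case can occur, and $u^\star$ is a fixed point of $\Phi$.

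The step I expect to require the most care is verifying that $\Psi$ is genuinely compact on all of $B_R$ (not only on bounded subsets where $\Phi$ is compact by assumption) and that the radial retraction $r$ preserves continuity near the sphere $\{\|v\|=R\}$; both are routine but must be checked cleanly. The only nontrivial ingredient used as a black box is Schauder's theorem itself; everything else is the retraction trick combined with the quantitative boundedness bound $M$ supplied by the hypothesis, which is precisely what forces the would-be boundary fixed point back into the interior and hence into the unretracted regime.
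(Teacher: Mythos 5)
Your argument is correct, and it is essentially the standard proof of Schaefer's theorem given in the very reference the paper cites (Evans): the paper itself states the result without proof, and Evans proves it exactly by composing $\Phi$ with the radial retraction onto a large ball, applying Schauder's theorem there, and using the a priori bound on $\{u=\lambda\Phi(u),\ \lambda\in[0,1]\}$ to exclude the case $\|\Phi(u^\star)\|>R$. The compactness of $r\circ\Phi$ and the continuity of the radial retraction that you flag are indeed routine (the image of the ball under $\Phi$ has compact closure, and its image under the continuous map $r$ is again contained in a compact set), so there is no gap.
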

\begin{pro}\label{compacite}{\bf (Aubin's lemma)}\cite{Simon}\\
For any $T>0$, and $\O=\mathbb T^N$, let $E$ denote the space $$E:=\left\{g\in L^2((0,T);H^1(\O)) \mbox{ and }
g_t\in L^2((0,T);H^{-1}(\O)) \right\},$$
endowed with the Hilbert norm
$$\left\|\omega\right\|_E=\left(\left\|\omega\right\|^2_{L^2(0,T;H^1(\O))}+\left\|\omega_t\right\|^2_{L^2(0,T;H^{-1}(\O))}\right)^\frac{1}{2}.$$  The embedding
$$ E\hookrightarrow L^2((0,T);L^2(\O)) \quad \mbox{is compact.}$$
\end{pro}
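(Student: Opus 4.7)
The plan is to prove this compactness via the classical Aubin--Lions--Simon framework, whose three basic ingredients in the present setting are: the Rellich compact embedding $H^1(\mathbb T^N)\hookrightarrow L^2(\mathbb T^N)$, the Lions--Ehrling interpolation inequality relating the three spaces $H^1$, $L^2$, $H^{-1}$, and the Fréchet--Kolmogorov characterization of relative compactness in $L^2(\Omega_T)$.

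First I would establish the Ehrling-type inequality: for every $\varepsilon>0$ there exists $C_\varepsilon>0$ such that
$$\|v\|_{L^2(\Omega)}\ \leq\ \varepsilon\,\|v\|_{H^1(\Omega)}+C_\varepsilon\,\|v\|_{H^{-1}(\Omega)}, \qquad v\in H^1(\Omega).$$
This is proved by a standard contradiction argument: if it failed, one could find a normalized sequence in $H^1$ that converges to $0$ in $H^{-1}$ but has $L^2$-norm bounded below; the Rellich compactness extracts a subsequence converging in $L^2$, whose limit would be simultaneously nonzero and zero. This is the only place where the compactness $H^1\hookrightarrow\hookrightarrow L^2$ is used, and it is also the main (only nontrivial) obstacle of the proof.

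Next I would control time translations. For $g\in E$ and $h\in(0,T)$, the identity $g(t+h)-g(t)=\int_t^{t+h}g'(s)\,ds$ in $H^{-1}(\Omega)$ together with Cauchy--Schwarz and Fubini yields
$$\int_0^{T-h}\|g(t+h)-g(t)\|_{H^{-1}}^2\,dt\ \leq\ h^{2}\,\|g'\|_{L^2(0,T;H^{-1})}^{2}.$$
Applying the Ehrling inequality from Step~1 to $v=g_n(t+h)-g_n(t)$ and integrating in $t$ gives, for any bounded sequence $(g_n)\subset E$,
$$\int_0^{T-h}\|g_n(t+h)-g_n(t)\|_{L^2}^{2}\,dt\ \leq\ 8\varepsilon^{2}\|g_n\|_{L^2(0,T;H^1)}^{2}+2C_\varepsilon^{2}\,h^{2}\|g_n'\|_{L^2(0,T;H^{-1})}^{2},$$
which can be made arbitrarily small uniformly in $n$ by choosing $\varepsilon$ small first and then $h$ small. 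Spatial translations are handled directly, since on the torus $\|\tau_y v-v\|_{L^2}\le|y|\,\|\nabla v\|_{L^2}$, giving $\int_0^T\|\tau_y g_n-g_n\|_{L^2}^{2}\,dt\le|y|^{2}\|g_n\|_{L^2(0,T;H^1)}^{2}\to 0$ uniformly in $n$ as $|y|\to 0$.

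To conclude, $(g_n)$ is bounded in $L^2(\Omega_T)$ (since $L^2(H^1)\hookrightarrow L^2(L^2)$), and the joint space--time translations vanish uniformly in $n$; the Fréchet--Kolmogorov criterion on $L^2(\Omega_T)=L^2(0,T;L^2(\Omega))$ then delivers a convergent subsequence, giving the claimed compact embedding. In summary, the only real work lies in the Ehrling inequality, which encodes the compactness hypothesis; after that the argument is a straightforward Cauchy--Schwarz plus Fréchet--Kolmogorov bookkeeping.
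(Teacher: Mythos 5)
The paper does not prove this proposition at all: it is quoted as a known result with a citation to Simon's paper, so there is no ``paper proof'' to match. Your self-contained argument is the classical Aubin--Lions proof and is essentially correct: the Ehrling (Lions) interpolation inequality $\|v\|_{L^2}\le\varepsilon\|v\|_{H^1}+C_\varepsilon\|v\|_{H^{-1}}$ obtained by contradiction from Rellich, the time-translation bound $\int_0^{T-h}\|g(t+h)-g(t)\|_{H^{-1}}^2\,dt\le h^2\|g_t\|_{L^2(H^{-1})}^2$, the spatial-translation bound from the gradient, and then Fr\'echet--Kolmogorov in the joint variable on $\Omega_T$. This is a different (more hands-on) route than Simon's own proof, which goes through a characterization of compact sets in $L^p(0,T;B)$ via compactness of the averages $\int_{t_1}^{t_2}g$ and decay of $\|\tau_h g-g\|_{L^p(0,T-h;B)}$; your version buys self-containedness at the cost of being tied to the Hilbertian triple $H^1\subset L^2\subset H^{-1}$, whereas Simon's applies to general Banach triples and to the weaker hypotheses used elsewhere in the paper (Lemma~2.3 and Theorem~2.4). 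One detail you should make explicit: the Fr\'echet--Kolmogorov criterion on the bounded set $\Omega_T$ also requires that no mass concentrates near the ends $t=0,T$ (the translations are only controlled on $(0,T-h)$); this is supplied by the uniform bound in $C([0,T];L^2(\Omega))$ coming from the continuous embedding $E\hookrightarrow C([0,T];L^2(\Omega))$ that the paper records immediately after the statement, so the gap is cosmetic rather than substantive.
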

On the other hand, it follows from \cite[{P}roposition 2.1 and {T}heorem 3.1, {C}hapter 1]{cont} that the embedding $$ E\hookrightarrow C([0,T];L^2(\O))\quad \mbox{is continuous}.$$
\begin{lem}\textbf{(Simon's Lemma)}\label{lemsimon}\cite{Simon}\\
Let $X$, $B$ and $Y$ three Banach spaces, where $X\hookrightarrow B$
with compact embedding and $B\hookrightarrow Y$ with continuous
embedding. If $(g^n)_n$ is a sequence such that
$$
\|g^n\|_{L^q(0,T;B)}+ \|g^n\|_{L^1(0,T;X)}+\|g_t^n\|_{L^1(0,T;Y)}\le
C,
$$
where $1<q\le \infty$, and $C$ is a constant independent of $n$,
then $(g^n)_n$ is relatively compact in $L^p(0,T;B)$ for all $1\le
p<q$.
\end{lem}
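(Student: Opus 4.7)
The plan is to deduce the lemma from a Kolmogorov--Fréchet--Riesz-type compactness criterion for vector-valued $L^p$ functions, combined with an interpolation step. First, since $(g^n)$ is bounded in $L^q(0,T;B)$ with $q>1$, it suffices to prove relative compactness in $L^1(0,T;B)$: once a subsequence converges in $L^1(0,T;B)$, the interpolation inequality $\|f\|_{L^p(0,T;B)}\le \|f\|_{L^1(0,T;B)}^{\theta}\|f\|_{L^q(0,T;B)}^{1-\theta}$, valid for $1\le p<q$ with a suitable $\theta\in(0,1]$, upgrades convergence to $L^p(0,T;B)$ for every $1\le p<q$.

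To establish $L^1(0,T;B)$-compactness I would verify two conditions: (i) equicontinuity of time translations, namely $\|g^n(\cdot+h)-g^n(\cdot)\|_{L^1(0,T-h;B)}\to 0$ as $h\to 0^+$ uniformly in $n$; and (ii) relative compactness in $B$ of the local-in-time averages $G^n_{a,b}:=\frac{1}{b-a}\int_a^b g^n(t)\,dt$ for every $0<a<b<T$. The essential tool for (i) is the Ehrling--Lions interpolation inequality, which holds because $X\hookrightarrow B$ is compact and $B\hookrightarrow Y$ is continuous: for every $\eta>0$ there exists $C_\eta>0$ such that
$$\|u\|_B \le \eta\,\|u\|_X + C_\eta\,\|u\|_Y \qquad \text{for all } u\in X.$$
Applying this to $u=g^n(t+h)-g^n(t)$, integrating over $t\in(0,T-h)$, and combining the uniform bound $\|g^n\|_{L^1(0,T;X)}\le C$ with the fundamental-theorem-of-calculus estimate
$$\int_0^{T-h}\|g^n(t+h)-g^n(t)\|_Y\,dt \;\le\; \int_0^{T-h}\!\!\int_t^{t+h}\|g^n_s(s)\|_Y\,ds\,dt \;\le\; h\,\|g^n_t\|_{L^1(0,T;Y)} \;\le\; Ch,$$
yields $\int_0^{T-h}\|g^n(t+h)-g^n(t)\|_B\,dt \le 2C\eta + CC_\eta h$, from which (i) follows by choosing first $\eta$ small and then $h$ small. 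For (ii), the $L^1(0,T;X)$ bound gives $\|G^n_{a,b}\|_X \le C/(b-a)$ uniformly in $n$, so the compact embedding $X\hookrightarrow B$ produces a subsequence along which $G^n_{a,b}$ converges in $B$; a diagonal extraction over a countable family of intervals handles all $(a,b)$ at once.

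Combining (i) and (ii) via the Fréchet--Kolmogorov criterion for Bochner spaces (in the version that trades pointwise-in-time compactness in $B$ for compactness of local time averages) yields relative compactness in $L^1(0,T;B)$, and the interpolation step from the first paragraph completes the proof. The main obstacle is the asymmetry in the hypotheses: control of $g^n$ and of $g^n_t$ is available only at $L^1$ in time, whereas the target conclusion gives compactness up to any $L^p$ with $p<q$. The Ehrling--Lions inequality is precisely what allows the $L^1(0,T;X)$ bound and the $L^1(0,T;Y)$ derivative bound to cooperate to produce equicontinuity of time translates in $B$, while the $L^q(0,T;B)$ bound supplies the missing integrability needed for interpolation, so the two weak ingredients combine to give a strong conclusion.
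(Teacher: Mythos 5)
Your proof is correct, but be aware that the paper offers no proof of this lemma at all: it is quoted verbatim from Simon \cite{Simon}, and the only compactness proof actually written out in the paper is for the variant (Theorem \ref{lemsimonvar}), where the bound on $g^n_t$ in $L^1(0,T;Y)$ is replaced by a bound on $\left\|g^n\right\|_{\mbox{Var}([0,T);Y)}$ and the argument proceeds by mollifying in time and invoking Simon's Corollary 6. What you have written is essentially a reconstruction of Simon's own argument rather than a new route: the Ehrling--Lions inequality $\|u\|_B\le\eta\|u\|_X+C_\eta\|u\|_Y$ coming from the compact embedding $X\hookrightarrow B$, the translation estimate $\int_0^{T-h}\|g^n(t+h)-g^n(t)\|_Y\,dt\le h\,\|g^n_t\|_{L^1(0,T;Y)}$ (legitimate here because $g^n_t$ is assumed to lie in $L^1(0,T;Y)$, so $g^n\in W^{1,1}(0,T;Y)$), the characterization of relatively compact subsets of $L^1(0,T;B)$ through compactness in $B$ of the time integrals $\int_a^b g^n\,dt$ together with uniform smallness of translations (this is Simon's Theorem 1, which you invoke but do not prove, so your argument is self-contained only modulo that criterion --- note also that no subsequence extraction is needed there: boundedness of the averages in $X$ plus compactness of $X\hookrightarrow B$ already gives relative compactness of the set $\{\int_a^b g^n\,dt\}_n$ in $B$), and finally interpolation $\|f\|_{L^p(0,T;B)}\le\|f\|_{L^1(0,T;B)}^{\theta}\|f\|_{L^q(0,T;B)}^{1-\theta}$ applied to differences along the extracted subsequence to upgrade $L^1(0,T;B)$ compactness to $L^p(0,T;B)$ for all $p<q$; all of these steps are sound. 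The one caveat worth recording is that your fundamental-theorem-of-calculus step is precisely what breaks down in the situation of Theorem \ref{lemsimonvar}, where the time derivative is only controlled as a measure (bounded variation in $t$); there one must first regularize in time, as the paper does, before applying Simon's result. So your proof establishes the lemma as stated but does not subsume the variant that the paper also needs.
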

Now we will present the variant of  the original result of Simon's lemma \cite[Corollary 6, page 87]{Simon}. First of all, let us define the norm $\left\|.\right\|_{\mbox{Var}([a,b);Y)}$ where $Y$ is a Banach space with the norm $\left\|.\right\|_Y$.\\
For a function $g:[a,b)\to Y$, we set
\begin{equation}\label{Var}
 \left\|g\right\|_{\mbox{Var}([a,b);Y)}=\sup \sum_j \left\|g(a_{j+1})-g(a_j)\right\|_Y
\end{equation}
over all possible finite partitions:
$$a\le a_0<\dots <a_k<b$$
\begin{thm}\textbf{(Variant of Simon's Lemma)}\label{lemsimonvar}\\
Let $X$, $B$ and $Y$ three Banach spaces, where $X\hookrightarrow B$
with compact embedding and $B\hookrightarrow Y$ with continuous
embedding. Let $(g^n)_n$ be a sequence such that
\begin{equation}\label{eq::jr1}
\left\|g^n\right\|_{L^1(0,T;X)}+\left\|g^n\right\|_{L^q(0,T;B)}+\left\|g^n\right\|_{\mbox{Var}([0,T);Y)}\le C,
\end{equation}
where $1<q< \infty$, and $C$ is a constant independent of $n$.
Then $(g^n)_n$ is relatively compact in $L^p(0,T;B)$ for all $1\le
p<q$.
\end{thm}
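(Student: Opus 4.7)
The strategy is to reduce the claim to the classical Simon compactness result (used in Lemma~\ref{lemsimon}, Theorem~1 of \cite{Simon}) by converting the BV-in-$Y$ control into a uniform $L^1$-modulus of continuity in $Y$ of the time-translates of $g^n$. The essential bridge is the elementary inequality
\begin{equation}\label{eq::BVtrans}
\int_0^{T-h}\|g(t+h)-g(t)\|_Y\,dt\ \le\ h\,\|g\|_{\mbox{Var}([0,T);Y)}\qquad \text{for every }h\in(0,T).
\end{equation}
I would prove (\ref{eq::BVtrans}) by writing, for each $s\in(0,h)$, the partition $\{s,s+h,s+2h,\dots\}\cap[0,T)$; the sum $\sum_{k}\|g(s+(k+1)h)-g(s+kh)\|_Y$ is bounded by $\|g\|_{\mbox{Var}([0,T);Y)}$ directly from the definition (\ref{Var}). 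Integrating this estimate over $s\in(0,h)$ and applying Fubini with the change of variable $u=s+kh$ identifies the resulting integral with the left-hand side of (\ref{eq::BVtrans}).

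Since $X\hookrightarrow B$ is compact and $B\hookrightarrow Y$ is continuous, Ehrling's lemma is applicable: for every $\epsilon>0$ there exists $C_\epsilon>0$ such that $\|w\|_B\le\epsilon\|w\|_X+C_\epsilon\|w\|_Y$ for every $w\in X$. Applying it to $w=g^n(t+h)-g^n(t)$, integrating in $t\in(0,T-h)$, and combining (\ref{eq::jr1}) with (\ref{eq::BVtrans}), I would obtain
\begin{equation*}
\int_0^{T-h}\|g^n(t+h)-g^n(t)\|_B\,dt\ \le\ 2\epsilon\,\|g^n\|_{L^1(0,T;X)}+C_\epsilon\,h\,\|g^n\|_{\mbox{Var}([0,T);Y)}\ \le\ 2\epsilon C+C_\epsilon h C.
\end{equation*}
Letting first $h\to 0$ and then $\epsilon\to 0$ yields uniform-in-$n$ equicontinuity of the time-translates in $L^1(0,T;B)$.

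To promote the $L^1$ modulus to an $L^p$ modulus for $1\le p<q$, I would interpolate using log-convexity of the $L^r$-norms: writing $\frac{1}{p}=\alpha+\frac{1-\alpha}{q}$ with $\alpha\in(0,1]$,
\begin{equation*}
\|g^n(\cdot+h)-g^n\|_{L^p(0,T-h;B)}\ \le\ \|g^n(\cdot+h)-g^n\|_{L^1(0,T-h;B)}^{\alpha}\,\|g^n(\cdot+h)-g^n\|_{L^q(0,T-h;B)}^{1-\alpha}.
\end{equation*}
The second factor is bounded uniformly in $n$ by $2C$ from the $L^q(0,T;B)$-bound, while the first tends to zero uniformly in $n$ by the previous step; hence the uniform $L^p$-equicontinuity of the time-translates. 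Combined with the $L^p(0,T;B)$-boundedness of $(g^n)_n$ (which follows from $p\le q$ and the $L^q$-bound on a finite interval) and with the relative compactness in $B$ of the time-averages $\frac{1}{t_2-t_1}\int_{t_1}^{t_2}g^n\,dt$ (which follows from the $L^1(0,T;X)$-bound together with the compactness of $X\hookrightarrow B$), Simon's criterion from Theorem~1 of \cite{Simon} then delivers the relative compactness of $(g^n)_n$ in $L^p(0,T;B)$.

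The main obstacle is the total-variation inequality (\ref{eq::BVtrans}); once it is in hand, the proof is essentially the classical Aubin--Lions--Simon scheme, with the standard $L^1(0,T;Y)$-bound on $\partial_t g^n$ replaced by the BV-bound — both mechanisms producing exactly the same $O(h)$ modulus in $L^1(0,T;Y)$.
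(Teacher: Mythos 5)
Your proof is correct, but it follows a genuinely different route from the paper's. The paper mollifies each $g^n$ in time with a kernel $\bar\rho_{\varepsilon_n}$, $\varepsilon_n\to 0$, shows that the mollified sequence has time derivative bounded in $L^1(\delta,T-\delta;Y)$ by the variation norm (via a uniform approximation of $g^n$ by step functions), applies Simon's Corollary 6 to the mollified sequence on the subintervals $(\delta,T-\delta)$, and transfers the compactness back using $\|\bar g^n-g^n\|_{L^q(0,T;B)}\to 0$. You instead work directly with the original sequence: your averaging-over-shifted-partitions argument for $\int_0^{T-h}\|g(t+h)-g(t)\|_Y\,dt\le h\,\|g\|_{\mbox{Var}([0,T);Y)}$ is the exact substitute for the paper's ``variation controls the $L^1$ norm of the derivative of the mollification'' step, and it is correct (the change of variables $u=s+kh$ covers $(0,T-h)$ exactly once). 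From there, Ehrling's lemma plus the $L^1(0,T;X)$ bound upgrades the modulus from $Y$ to $B$, interpolation upgrades it from $L^1$ to $L^p$ for $p<q$, and the $L^1(0,T;X)$ bound also gives condition (i) of Simon's Theorem 1 (relative compactness of the time integrals in $B$), so that criterion applies directly. What your approach buys is self-containedness: you avoid the mollification, the step-function approximation of $Y$-valued BV functions, and the boundary-layer issue of working on $(\delta,T-\delta)$; what the paper's approach buys is brevity, since it delegates everything to an already-packaged corollary. The only shared (and minor) caveat is that both arguments implicitly use a pointwise-defined representative of $g^n$ on $[0,T)$, which is what the variation norm presupposes anyway.
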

\begin{proof}
\noindent {\bf Step 1: Regularization of the sequence}\\
Let $\bar \rho \in C^\infty_c(\mathbb R)$ with $\bar \rho\ge 0$, $\int_{\mathbb R} \bar \rho =1$ and $\mbox{supp}\ \bar \rho \subset (-1,1)$.
For $\varepsilon>0$, we set
$$\bar \rho_\varepsilon (x)= \varepsilon^{-1}\bar \rho(\varepsilon^{-1}x).$$
We extend $g^n=g^n(t)$ by zero outside the time interval $[0,T)$.
Because $q<+\infty$, we see that for each $n$, we choose some $0< \varepsilon_n\to 0$ as $n\to +\infty$ such that
\begin{equation}\label{eq::jr2}
\left\|\bar g^n-g^n\right\|_{L^q(0,T;B)}\rightarrow 0 \quad \mbox{as}\quad n\to +\infty,\quad \mbox{with}\quad \bar g^n = \bar \rho_{\varepsilon_n} \star g^n
\end{equation}
For any $\delta>0$ small enough, we also have for $n$ large enough (such that $\varepsilon_n<\delta$):
$$\left\|\bar g^n\right\|_{L^1(\delta,T-\delta;X)}\le \left\|g^n\right\|_{L^1(0,T;X)} \le C$$
and
\begin{equation}\label{eq::jr3}
\left\|\bar g_t^n\right\|_{L^1(\delta,T-\delta;Y)} \le \left\|g^n\right\|_{\mbox{Var}([0,T);Y)}\le C
\end{equation}
\noindent {\bf Step 2: Checking (\ref{eq::jr3})}\\
By (\ref{eq::jr1}) there exists a sequence of  step functions $f_\eta$ 
which approximates uniformly $g^n$ on $[0,T)$ as $\eta\to 0$, with moreover satisfies
$$\left\|f_\eta\right\|_{\mbox{Var}([0,T);Y)} \to \left\| g^n\right\|_{\mbox{Var}([0,T);Y)}.$$
Therefore we get easily (for $\varepsilon_n< \delta$)
$$\left\|(\bar  \rho_{\varepsilon_n} \star f_\eta )_t\right\|_{L^1(\delta,T-\delta;Y)} \le \left\|f_\eta\right\|_{\mbox{Var}([0,T);Y)}$$
which implies (\ref{eq::jr3}), when we pass to the limit as $\eta$ goes to zero.\\
\noindent {\bf Step 3: Conclusion}\\
We can then apply Corollary 6 in \cite{Simon} to deduce that $\bar g^n$ is relatively compact in $L^p(0,T;B)$ for all $1\le
p<q$. Because of (\ref{eq::jr2}), we deduce that this is also the case for the sequence $(g^n)_n$, which ends the proof of the Theorem.
$\qedhere$
\end{proof}
\section{Existence for system (\ref{sys0.1})}
Our goal is to prove Theorem \ref{th0.1} in order to get the existence of a solution for system (\ref{sys0.1}).
\subsection{Existence for the linear elliptic problem (\ref{sys1})}
In this subsection we prove the existence, via Lax-Milgram theorem, of the unique solution for the linear elliptic system (\ref{sys1}).\\
Let us recall our linear elliptic system. Assume that $A$ is any $m\times m$ real matrix. Let $v^{n+1}=(v^{i,n+1})_{1\leq i\leq m}\in (L^2(\O))^m$ and $u^n=(u^{i,n})_{1\leq i\leq m}\in (H^1(\O))^m$. Then for all $\Delta t$, $\e$, $\ell$, $\eta$, $\delta>0$, with $\e<1<\ell$ and $\Delta t<\tau$ where $\tau$ is given in (\ref{eq4}), we look for the solution $u^{n+1}=(u^{i,n+1})_{1\leq i\leq m}$ of the following system:\\
\begin{eqnarray}\label{sys2}
\left\{\begin{array}{cll}
\displaystyle{\frac{u^{i,n+1}-u^{i,n}}{\Delta t}} &=& {\rm div} \left\{J_{\epsilon,\ell,\eta,\delta}^{i}(v^{n+1},u^{n+1})\right\}\quad\mbox{in}\quad {\mathcal D}'(\O),\\\\
J_{\epsilon,\ell,\eta,\delta}^{i}(v^{n+1},u^{n+1})&=&T^{\epsilon,\ell}(v^{i,n+1})\left\{\overset{m}{\underset{j=1}{\sum}} A_{ij}\nabla\rho_\eta\star\rho_\eta\star u^{j,n+1}+\delta \nabla u^{i,n+1}\right\},
\end{array}\right.
\end{eqnarray}
where $T^{\e,\ell}$ is given in (\ref{T}).\\
\begin{pro}\textbf{(Existence for system (\ref{sys2}))}\label{theo1}\\
 Assume that $A$ is any $m\times m$ real matrix. Let $\Delta t$, $\e$, $\ell$, $\eta$, $\delta>0$, with $\e<1<\ell$, such that
\begin{equation}\label{eq4}
\Delta t< \frac{\delta\e\eta^2}{C_0{^2}\ell^2 \left\|A\right\|^2}:=\tau,
\end{equation}
where 
\begin{equation}\label{C_0}
{C_0=\left\|\n \rho\right\|_{L^1(\mathbb R^N)}}.
\end{equation}
Then for $n\in \mathbb N$, for a given $v^{n+1}=(v^{i,n+1})_{1\leq i\leq m}\in (L^2(\Omega))^m$ and $u^{n}=(u^{i,n})_{1\leq i\leq m}\in
(H^1(\Omega))^m$, there exists a unique function $u^{n+1}=(u^{i,n+1})_{1\leq i\leq m}\in(H^1(\O))^m$ solution of system {(\ref{sys2})}. Moreover, this solution $u^{n+1}$ satisfies the following estimate
 \begin{equation}\label{est0}
\left(1-\frac{\Delta t }{\tau}\right)\left\|u^{n+1}\right\|^2_{(L^2(\O))^m}+\Delta t \e \delta\left\|\n u^{n+1}\right\|^2_{(L^2(\O))^m}\leq \left\|u^{n}\right\|^2_{(L^2(\O))^m},
\end{equation}
where $\tau$ is given in (\ref{eq4}).
\end{pro}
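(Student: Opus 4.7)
The plan is to take $X:=(H^1(\Omega))^m$, recast (\ref{sys2}) as a weak problem on $X$, and apply the Lax--Milgram theorem. Multiplying (\ref{sys2}) by a test function $\varphi=(\varphi^i)_i\in X$ and integrating by parts, I obtain the bilinear form
\[
a(u,\varphi) := \frac{1}{\Delta t}\sum_{i=1}^m\int_\Omega u^i\varphi^i + \sum_{i=1}^m\int_\Omega T^{\epsilon,\ell}(v^{i,n+1})\left(\sum_{j=1}^m A_{ij}\,\nabla\rho_\eta\star\rho_\eta\star u^j + \delta\nabla u^i\right)\cdot\nabla\varphi^i
\]
and the linear form $L(\varphi):=\frac{1}{\Delta t}\sum_i \int_\Omega u^{i,n}\varphi^i$, which is continuous on $X$. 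Since $\Omega$ is the torus, the $\mathbb{Z}^N$-periodic extension of the $u^j$ makes each convolution $\rho_\eta\star u^j$ a smooth periodic function, so all the integrals are well defined.

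\textbf{Continuity and coercivity.} Continuity of $a$ on $X\times X$ is immediate from $\epsilon\le T^{\epsilon,\ell}\le\ell$, the matrix norm of $A$, and the convolution estimate $\|\nabla\rho_\eta\star f\|_{L^2}\le (C_0/\eta)\|f\|_{L^2}$, which follows from $\nabla\rho_\eta(x)=\eta^{-N-1}(\nabla\rho)(x/\eta)$ together with (\ref{C_0}). Coercivity is the delicate point. The two diagonal contributions of $a(u,u)$ are nonnegative and yield at least
\[
\frac{1}{\Delta t}\|u\|^2_{(L^2(\Omega))^m} + \delta\epsilon\|\nabla u\|^2_{(L^2(\Omega))^m}.
\]
For the off-diagonal cross term, I would bound it pointwise by $\ell\|A\|$ times the Euclidean norms in $\mathbb{R}^m$, then integrate, use Cauchy--Schwarz on $\Omega$, and apply the convolution estimate to obtain
\[
\Big|\sum_{i,j=1}^m\int_\Omega T^{\epsilon,\ell}(v^{i,n+1})A_{ij}\,\nabla\rho_\eta\star\rho_\eta\star u^j\cdot\nabla u^i\Big|\le \frac{\ell\|A\|C_0}{\eta}\|u\|_{(L^2(\Omega))^m}\|\nabla u\|_{(L^2(\Omega))^m}.
\]
Splitting by $ab\le \frac{\delta\epsilon}{2}a^2+\frac{1}{2\delta\epsilon}b^2$ absorbs half of the diagonal gradient contribution and leaves
\[
a(u,u)\ge \left(\frac{1}{\Delta t}-\frac{1}{2\tau}\right)\|u\|^2_{(L^2(\Omega))^m} + \frac{\delta\epsilon}{2}\|\nabla u\|^2_{(L^2(\Omega))^m},
\]
with $\tau$ as in (\ref{eq4}). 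The hypothesis $\Delta t<\tau$ forces the prefactor of $\|u\|^2$ to exceed $1/(2\Delta t)>0$, so $a$ is coercive on $X$ and Lax--Milgram delivers a unique $u^{n+1}\in X$.

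\textbf{A priori estimate and main obstacle.} To derive (\ref{est0}), I would test the weak equation directly against $u^{n+1}$ itself, invoke the identity $u^{n+1}(u^{n+1}-u^n)\ge\frac{1}{2}((u^{n+1})^2-(u^n)^2)$ for the discrete time derivative, and repeat the same splitting of the cross term as in the coercivity step; multiplying the resulting inequality by $2\Delta t$ reproduces (\ref{est0}) verbatim. The main obstacle throughout is the off-diagonal coupling through $A_{ij}$: without the convolution smoothing, that term could only be controlled by $\|\nabla u\|^2_{(L^2(\Omega))^m}$, which would compete directly with the sole positive gradient term $\delta\epsilon\|\nabla u\|^2$ and could not be absorbed unless $\delta\epsilon$ were large. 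The gain of one derivative from $\rho_\eta$ converts the bound into one of mixed type $\|u\|_{L^2}\|\nabla u\|_{L^2}$ (at the price of the factor $C_0/\eta$), so Young's inequality transfers only an $L^2$ contribution onto the $\|u\|^2$ side, and the smallness $\Delta t<\tau$ is exactly what is needed to swallow it.
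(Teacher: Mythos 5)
Your proposal is correct and follows essentially the same route as the paper: a Lax--Milgram argument with the same splitting of the bilinear form, the same convolution bound $\|\nabla\rho_\eta\star\rho_\eta\star u\|_{L^2}\le (C_0/\eta)\|u\|_{L^2}$ fed into Young's inequality to absorb the cross term, and the same derivation of (\ref{est0}) by testing the equation against $u^{n+1}$. The only difference is the harmless normalization of the bilinear form by $1/\Delta t$.
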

\begin{proof}
The proof is done in four steps using Lax-Milgram theorem.\\ First {of all}, let us define for all $u^{n+1}=(u^{i,n+1})_{1\leq i\leq m}$ and $\varphi=(\varphi^i)_{1\leq i\leq m}\in (H^1(\Omega))^m$, the following bilinear form:
\begin{eqnarray*} 
a(u^{n+1},\varphi)&=&\si\int_\O u^{i,n+1}\varphi^i + \Delta t\sum_{i,j=1}^m \int_{\Omega}T^{\e,\ell}(v^{i,n+1})A_{ij}
\left(\nabla\rho_\eta\star\rho_\eta\star u^{j,n+1}\right)\cdot\nabla\varphi^i\\ 
& &+ \Delta t\delta \si\int_{\O} T^{\e,\ell}(v^{i,n+1})\nabla u^{i,n+1}\cdot\nabla\varphi^i,
\end{eqnarray*}
which can be also {rewritten} as 
\begin{eqnarray*} 
a(u^{n+1},\varphi)&=&\left\langle u^{n+1},\varphi\right\rangle_{(L^2(\O))^m}+ \Delta t\left\langle T^{\e,\ell}(v^{n+1})\nabla\varphi, A
\nabla\rho_\eta\star\rho_\eta\star u^{n+1}\right\rangle_{(L^2(\O))^m}\\ 
& &+ \Delta t\delta \left\langle T^{\e,\ell}(v^{n+1})\nabla\varphi,\nabla u^{n+1}\right\rangle_{(L^2(\O))^m},\\
\end{eqnarray*}
where $\left\langle\cdot , \cdot\right\rangle_{(L^2(\O))^m}$ denotes the scalar product on $(L^2(\O))^m$, and the following linear form:
$$L(\varphi)=\sum_{i=1}^m\int_{\Omega}u^{i,n}\varphi^i=\left\langle u^{n},\varphi\right\rangle_{(L^2(\O))^m}.$$
\textbf{Step 1: Continuity of $a$} \\
For every $n\in\mathbb{N}$, $u^{n+1}$ and $\varphi\in(H^1(\Omega))^m$, we have
\begin{eqnarray*}
 |a(u^{n+1},\varphi)| &\le & \|
u^{n+1}\|_{(L^2(\Omega)^m}\|\varphi\|_{(L^2(\Omega))^m} +\Delta t \ell \|A\|\|\nabla\rho_\eta\star\rho_\eta\star u^{n+1}\|_{(L^2(\Omega))^m}\|\nabla\varphi\|_{(L^2(\Omega))^m}\\
& & + \Delta t
 \delta \ell \|\nabla u^{n+1}\|_{(L^2(\Omega))^m}\|\nabla\varphi\|_{(L^2(\Omega))^m}\\
 &\le & 3 \max(1,\Delta t \ell \|A\|,\Delta t \delta
 \ell)\|u^{n+1}\|_{(H^1(\Omega))^m}\|\varphi\|_{(H^1(\Omega))^m}.
\end{eqnarray*}
where $\left\|A\right\|$ is given in (\ref{norme}) and we have used the fact that 
\begin{equation}\label{conv}
\left\|\n \rho_\eta\star\rho_\eta\star u^{n+1}\right\|_{(L^2(\O))^m}\leq \left\|\n u^{n+1}\right\|_{(L^2(\O))^m},
\end{equation}
and
\begin{equation}\label{T'}
\e\leq T^{\e,\ell}(a) \leq \ell,\quad \mbox{ for all } a\in \mathbb R.
\end{equation}
\textbf{Step 2: Coercivity of $a$}\\
For all $\varphi\in(H^1(\Omega))^m$, we have that $a(\varphi,\varphi)=a_0(\varphi,\varphi)+a_1(\varphi,\varphi)$, where
 $$a_0(\varphi,\varphi)= \left\|\v\right\|^2_{(L^2(\O))^m}+\Delta t\d \left\langle T^{\e,\ell}(\v)\n\v,\n\v\right\rangle_{(L^2(\O))^m}$$
and $$a_1(\varphi,\varphi)=\Delta t\left\langle T^{\e,\ell}(\v)\n\v,A \n\rho_\eta\star\rho_\eta\star\v\right\rangle_{(L^2(\O))^m}.$$
On the one hand, we already have the coercivity of $a_0 $:
\begin{eqnarray*}
a_0(\varphi,\varphi)&\geq&\|\v\|_{(L^2(\Omega))^m}^2 + \Delta t\d\e \|\n\v\|_{(L^2(\Omega))^m}^2.
\end{eqnarray*}
On the other hand, we have
\begin{eqnarray*}
\left|a_1(\varphi,\varphi)\right|&\leq& \Delta t \ell \left\|A\right\|\left\|\n\rho_\eta\star\rho_\eta\star\v\right\|_{(L^2(\O))^m}\left\|\n\v\right\|_{(L^2(\O))^m}\\
&\leq& \Delta t \ell \left\|A\right\|\left(\frac{1}{2\alpha}\left\|\n\rho_\eta\star\rho_\eta\star\v\right\|^2_{(L^2(\O))^m}+\frac{\alpha}{2}\left\|\n\v\right\|^2_{(L^2(\O))^m}\right)\\
&\leq& \frac{\Delta t \ell^2\left\|A\right\|^2{C_0^2}}{2\delta \e\eta^2}\left\|\varphi\right\|^2_{(L^2(\O))^m}+\frac{\Delta t \e \delta}{2}\left\|\n\varphi\right\|^2_{(L^2(\O))^m},
\end{eqnarray*}
where in the second line we have used Young's inequality, and chosen $\displaystyle \alpha=\frac{\delta\e}{\left\|A\right\|\ell}$ in the third line, with ${C_0}$ is given in (\ref{C_0}) and $\left\|A\right\|$ is given in (\ref{norme}). So we get that
\begin{eqnarray}\label{a}
a(\varphi,\varphi)&\geq& \left(1-\frac{\Delta t }{2\tau}\right)\left\|\varphi\right\|^2_{(L^2(\O))^m}+\frac{\Delta t \e \delta}{2}\left\|\n\varphi\right\|^2_{(L^2(\O))^m}
\end{eqnarray}
is coercive, since $\Delta t<\tau$ where $\tau$ is given in (\ref{eq4}).\\\\
\textbf{Step 3: Existence by Lax-Milgram}\\
\noindent It is clear that $L$ is linear and continuous on
$\hn$. Then by Step 1, Step 2 and Lax-Milgram theorem there exists a unique solution, $u^{n+1}$,
of system (\ref{sys2}).\\\\
\noindent{\textbf{Step 4: Proof of estimate (\ref{est0})}}\\
Using (\ref{a}) and the fact that $a(u^{n+1},u^{n+1})=L(u^{n+1})$ we get
\begin{eqnarray*}
\left(1-\frac{\Delta t }{2\tau}\right)\left\|u^{n+1}\right\|^2_{(L^2(\O))^m}+\frac{\Delta t \e \delta}{2}\left\|\n u^{n+1}\right\|^2_{(L^2(\O))^m}&\leq& \left\langle u^{i,n},u^{i,n+1}\right\rangle_{(L^2(\O))^m}\\
&\leq&\frac{1}{2} \left\|u^{n}\right\|^2_{(L^2(\O))^m}+\frac{1}{2} \left\|u^{n+1}\right\|^2_{(L^2(\O))^m},
\end{eqnarray*}
which gives us the estimate (\ref{est0}).
$\qedhere$
\end{proof}
\subsection{Existence for the nonlinear time-discrete problem}
In this subsection we prove the existence, using Schaefer's fixed point theorem, of a solution for the nonlinear time discrete-system {(\ref{sys3})} given below. Moreover, {we also show that} this solution satisfies a suitable entropy estimate.\\
First, {to} {present} our result we need to choose a function $\Psi_{\e,\ell}$ which is continuous, convex and satisfies that $\displaystyle\Psi''_{\e,\ell}(x)=\frac{1}{T^{\e,\ell}(x)}$, where ${T^{\e,\ell}}$ is given in (\ref{T}). So let
\begin{eqnarray}
\Psi_{\e,\ell}({a}){-\frac{1}{e}}=
\left\{\begin{array}{lll}\label{psi2}
\frac{{a}^2}{2\e}+{a}\ln\epsilon-\frac{\e}{2} &\quad\mbox{ if } {a}\leq\e,&\\\\
{a}\ln {a} &\quad\mbox{ if } \e<{a}\leq\ell,& \\\\
\frac{{a}^2}{2\ell}+{a}\ln\ell-\frac{\ell}{2} &\quad\mbox{ if } {a}>{\ell.}& 
\end{array}\right.
\end{eqnarray}
Let us introduce our nonlinear time discrete system: Assume that $A$ satisfies (\ref{eq+}). {Let} ${u^{0}}=(u^{i,0})_{1\leq i\leq m}:=u_0=(u^i_0)_{1\leq i\leq m}$ that satisfies
\begin{equation}\label{C_1}
C_1:=\si\int_\O \Psi_{\e,\ell}(u^i_0)<+\infty,
\end{equation}
such that $u^i_0\geq 0$ in $\O$ for $i=1,\ldots,m$. Then for all $\Delta t$, $\e$, $\ell$, $\eta$, $\delta>0$, with $\e<1<\ell$ and $\Delta t<\tau$ where $\tau$ is given in (\ref{eq4}), for $n\in\mathbb N$, we look for a solution $u^{n+1}=(u^{i,n+1})_{1\leq i\leq m}$ of the following system:
\begin{eqnarray}\label{sys3}
\left\{\begin{array}{clll}
\displaystyle{\frac{u^{i,n+1}-u^{i,n}}{\Delta t}}&=&{\rm div} \left\{J_{\e,\ell,\eta,\d}^{i}(u^{n+1},u^{n+1})\right\}&\quad\mbox{ in } \mathcal D'(\Omega), {\mbox{ for } n\geq 0}\\\\
{u^{i,0}(x)}&=&{u^i_0(x)}&{\quad\mbox{ in } \Omega,}
\end{array}\right.
\end{eqnarray}
where $J_{\e,\ell,\eta,\d}^{i}$ is given in system {(\ref{sys2})}, and $T^{\e,\ell}$ is given in (\ref{T}).
\begin{pro}\label{th2}\textbf{(Existence for system (\ref{sys3}))}\\
Assume that $A$ satisfies (\ref{eq+}). {Let} ${u_0}=(u^i_0)_{1\leq i\leq m}$ that satisfies (\ref{C_1}), such that $u^i_0\geq 0$ {a.e.} in $\O$ for $i=1,\ldots,m$. Then for all $\Delta t$, $\e$, $\ell$, $\eta$, $\delta>0$, with $\e<1<\ell$ and $\Delta t<\tau$ where $\tau$ is given in {(\ref{eq4}),} there exists a sequence of functions $u^{n+1}=(u^{i,n+1})_{1\leq i\leq m}\in  \left(H^1(\O)\right)^m$ {for $n\in\mathbb N$}, solution of system (\ref{sys3}), that satisfies the following entropy estimate:\\ 
\begin{equation}\label{2n}
\sum_{i=1}^m\int_{\O}{\Psi}_{\e,\ell}(u^{i,n+1})+\d\Delta t\sum_{i=1}^m\sum_{k=0}^{n}\int_{\O}|\n u^{i,k+1}|^2+\delta_0\Delta t\sum_{i=1}^m\sum_{k=0}^{n}\int_{\O}|\n \rho_{\eta}\ {\star}\ u^{i,k+1}|^2\le\sum_{i=1}^m \int_{\O} {\Psi}_{\e,\ell}(u^{i}_0),
\end{equation}
where ${\Psi}_{\e,\ell}$ is given in {(\ref{psi2})}.
\end{pro}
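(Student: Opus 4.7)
My plan is to construct the sequence $(u^{n+1})_{n\ge 0}$ inductively, applying Schaefer's fixed point theorem (Theorem \ref{schaefer}) at each step. Fix $n\ge 0$ and assume $u^n=(u^{i,n})_{1\le i\le m}\in (L^2(\O))^m$ has been obtained; $u^0:=u_0$ belongs to $(L^2(\O))^m$ thanks to the quadratic growth of $\Psi_{\e,\ell}$ at infinity and the bound (\ref{C_1}). I define the map
$$\Phi:(L^2(\O))^m\longrightarrow (L^2(\O))^m,\qquad \Phi(v):=w,$$
where $w\in (H^1(\O))^m$ is the unique solution of the linear elliptic system (\ref{sys2}) furnished by Proposition \ref{theo1} when the frozen coefficient is $v^{n+1}:=v$. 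A fixed point $u^{n+1}=\Phi(u^{n+1})$ is exactly a solution of (\ref{sys3}), and induction on $n$ then yields the full sequence.

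To apply Schaefer, three properties of $\Phi$ must be checked. For \emph{compactness}, estimate (\ref{est0}) yields a uniform bound $\|\Phi(v)\|_{(H^1(\O))^m}\le C(u^n,\Delta t,\e,\d)$ independent of $v$, so the compact embedding $H^1(\O)\hookrightarrow L^2(\O)$ makes $\Phi$ compact. For \emph{continuity}, if $v_k\to v$ in $(L^2(\O))^m$ extract a subsequence converging a.e.; dominated convergence applied to the bounded continuous truncation gives $T^{\e,\ell}(v_k^i)\to T^{\e,\ell}(v^i)$ strongly in every $L^p(\O)$, $p<\infty$. Since $u_k:=\Phi(v_k)$ is bounded in $(H^1(\O))^m$, a further subsequence satisfies $u_k\rightharpoonup u$ weakly in $(H^1(\O))^m$ and strongly in $(L^2(\O))^m$, and one passes to the limit in the weak formulation of (\ref{sys2}) term by term (each term is a product of a strongly and a weakly convergent factor). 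By the uniqueness in Proposition \ref{theo1}, $u=\Phi(v)$, and the usual subsequence principle yields full convergence. For \emph{boundedness of the critical set}, if $u=\lambda\Phi(u)$ with $\lambda\in(0,1]$ then $u/\lambda=\Phi(u)$, and applying (\ref{est0}) to $u/\lambda$ gives
$$\|u\|^2_{(L^2(\O))^m}\ \le\ \frac{\lambda^2}{1-\Delta t/\tau}\,\|u^n\|^2_{(L^2(\O))^m}\ \le\ \frac{\|u^n\|^2_{(L^2(\O))^m}}{1-\Delta t/\tau},$$
uniformly in $\lambda\in[0,1]$. Schaefer's theorem then produces $u^{n+1}\in (H^1(\O))^m$.

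It remains to derive the entropy estimate (\ref{2n}). I would test the weak form of (\ref{sys3}) against $\Psi'_{\e,\ell}(u^{i,n+1})$; this is admissible since $\Psi''_{\e,\ell}=1/T^{\e,\ell}$ is bounded, so $\Psi'_{\e,\ell}$ is globally Lipschitz and $\Psi'_{\e,\ell}(u^{i,n+1})\in H^1(\O)$. The convexity of $\Psi_{\e,\ell}$ provides the discrete chain rule
$$\Psi'_{\e,\ell}(u^{i,n+1})(u^{i,n+1}-u^{i,n})\ \ge\ \Psi_{\e,\ell}(u^{i,n+1})-\Psi_{\e,\ell}(u^{i,n}).$$
On the spatial side, since $\n\Psi'_{\e,\ell}(u^{i,n+1})=\n u^{i,n+1}/T^{\e,\ell}(u^{i,n+1})$, the truncation cancels. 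Setting $\tilde u^{i,n+1}:=\rho_\eta\star u^{i,n+1}$ and using that $\rho_\eta$ is even (so $\int(\rho_\eta\star f)g=\int f(\rho_\eta\star g)$), I would transfer one mollifier from $u^{j,n+1}$ to $u^{i,n+1}$ to obtain
$$\int_\O(\n\rho_\eta\star\rho_\eta\star u^{j,n+1})\cdot\n u^{i,n+1}\ =\ \int_\O\n\tilde u^{j,n+1}\cdot\n\tilde u^{i,n+1}.$$
The positivity (\ref{eq+}) of $A$, applied pointwise in each spatial coordinate, then gives
$$\sij A_{ij}\int_\O\n\tilde u^{j,n+1}\cdot\n\tilde u^{i,n+1}\ \ge\ \d_0\si\int_\O|\n\tilde u^{i,n+1}|^2.$$
Multiplying the resulting discrete inequality by $\Delta t$, summing over $k=0,\dots,n$ and telescoping the entropy differences delivers (\ref{2n}).

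The main obstacle, in my view, is this convolution-symmetry manipulation: since $A$ is not assumed symmetric, the positivity (\ref{eq+}) can be exploited only after the asymmetric pairing $(\n\rho_\eta\star\rho_\eta\star u^j)\cdot\n u^i$ is rewritten as the symmetric $\n\tilde u^j\cdot\n\tilde u^i$ on the mollified variables---this is precisely what dictated the double regularization $\rho_\eta\star\rho_\eta$ (a single $\rho_\eta$ would not suffice). Everything else is comparatively routine: the Lax--Milgram output (\ref{est0}) makes Schaefer's three hypotheses essentially automatic, and the discrete chain rule via convexity of $\Psi_{\e,\ell}$ is the classical device for entropy estimates in implicit time schemes.
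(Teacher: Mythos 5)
Your proposal is correct and follows essentially the same route as the paper: Schaefer's fixed point theorem built on the Lax--Milgram solution map of (\ref{sys2}) with the bound (\ref{est0}) supplying compactness and the a priori bound on the set $\{v=\lambda\Phi(v)\}$, followed by testing against $\Psi'_{\e,\ell}(u^{i,n+1})$, the convexity (discrete chain rule), the cancellation $T^{\e,\ell}\,\Psi''_{\e,\ell}=1$, and the transfer of one mollifier by evenness of $\rho_\eta$ so that (\ref{eq+}) applies to the symmetrized pairing $\n(\rho_\eta\star u^j)\cdot\n(\rho_\eta\star u^i)$. You also correctly identified the role of the double convolution $\rho_\eta\star\rho_\eta$, which is exactly the mechanism the paper uses in Step 5 of its proof.
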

\begin{proof}
Our proof is based on the Schaefer's fixed point theorem. So we need to define, for a given $w:=u^n=(u^{i,n})_{1\leq i\leq m}\in (L^2(\O))^m$ and $v:=v^{n+1}=(v^{i,n+1})_{1\leq i\leq m}\in (L^2(\O))^m$, the map $\Phi$ as:
$$\begin{array}{ccccc}
\Phi & : &(L^2(\O))^m  & \to & (L^2(\O))^m \\
 & & v & \mapsto & u \\
\end{array}$$
where $u:=u^{n+1}=(u^{i,n+1})_{1\leq i\leq m}=\Phi(v^{n+1})\in (H^1(\O))^m$ is the unique solution of system (\ref{sys2}), given by Proposition \ref{theo1}.\\\\
\noindent\textbf{Step 1: Continuity of $\Phi$}\\
Let us consider the sequence $ v_k$ such that
$$\left\{\begin{array}{l}
v_k\in (L^2(\O))^m,\\
\\
v_k \longrightarrow v \quad \mbox{in}\quad (L^2(\O))^m.
\end{array}\right.$$ 
We want to prove that the sequence $u_k=\Phi(v_k)\longrightarrow u=\Phi(v)$ to get the continuity of $\Phi$.
From the estimate (\ref{est0}), we deduce that $u_k$ is
bounded in $(H^1(\O))^m$. Therefore, up to a subsequence, we have
$$
\left\{
\begin{array}{ll}
u_k \rightharpoonup u & \mbox{weakly in }  \; (H^1(\O))^m,\\
\mbox{and} &\\
u_k \rightarrow u & \mbox{strongly in }  \;(L^2(\O))^m,\\
\end{array}
\right.$$
where the strong convergence arises because $\O$ is compact. Thus, by the definition of the truncation operator $T^{\e,\ell}$, we can see that $T^{\e,\ell}$ is continuous and bounded, then by dominated convergence theorem, we have that 
$$T^{\e,\ell}(v_{k}^i) \longrightarrow T^{\e,\ell}(v^i) \quad \mbox{in } L^2(\O),\quad\mbox{ for }i=1,\ldots, m.$$
Now we have 
\begin{equation}\label{sys}
\displaystyle{\frac{u_k^{i}-w^i}{\Delta t}} =
{\rm div} \left\{J_{\e,\ell,\eta,\d}^{i}(v_k,u_k)\right\} \quad\mbox{ in }\, \mathcal D'(\O).
\end{equation}
This system also holds in $H^{-1}(\O)$, because $J_{\e,\ell,\eta,\d}^{i}(v_k,u_k)\in L^2(\O)$. Hence by multiplying this system by a test function in $\hn$ and integrating over $\O$ for the bracket $\left\langle \cdot,\cdot\right\rangle_{H^{-1}(\O)\times H^1(\O)}$, we can pass
directly to the limit in (\ref{sys}) as $k$ tends to $+\i$, and we get 
\begin{eqnarray}
\displaystyle{\frac{u^{i}-w^{i}}{\Delta t}} =
{\rm div} \{J_{\e,\ell,\eta,\d}^{i}(v,u)\} \quad\mbox{ in }\, \mathcal D'(\O).
\end{eqnarray}
where we used in particular the weak $L^2$ - strong $L^2$ convergence in the product $T^{\e,\ell}(v_k)\n u_k$. Then $u=(u^i)_{1\leq i\leq m}=\Phi(v)$ is a
solution {of} system {(\ref{sys2})}. Finally, by uniqueness of the
solutions of (\ref{sys2}), we deduce that the limit $u$ does not
depend on the choice of the subsequence, and then that the full
sequence converges:
$$u_k \rightarrow u \quad \mbox{strongly in}\quad (L^2(\O))^m, \quad
\mbox{with}\quad u =\Phi(v).$$\\
\noindent {\bf Step 2: Compactness of $\Phi$ }\\
By the definition of $\Phi$ we can see that for a bounded sequence $(v_k)_k$ in $(L^2(\O))^m$, $\Phi (v_k)=u_k$ converges strongly in $(L^2(\O))^m$ up to a subsequence, which implies the compactness of $\Phi$.\\\\
\noindent {\bf Step 3: A priori bounds on the solutions of $v=\lambda\Phi(v)$}\\
Let us consider a solution $v$ of
\begin{equation*}
 v=\lambda \Phi(v) \quad \mbox{for some}\quad \lambda\in [0,1].
\end{equation*}
By (\ref{est0}) we see that there exists a constant $C_2=C_2(\Delta t,\e,...)$ such that for any given $w\in (L^2(\O))^m$, we have $\left\|\Phi(v)\right\|_{(H^1(\O))^m}\leq C_2 \left\|w\right\|_{(L^2(\O))^m}$.
Hence $v=\lambda \Phi(v)$ is bounded.\\\\
\noindent {\bf Step 4: Existence of a solution}\\
 Now, we can apply Schaefer's
fixed point Theorem (Theorem \ref{schaefer}), to deduce that $\Phi$
has a fixed point $u^{n+1}$ on $(L^2(\O))^m$. This implies the existence of a solution $u^{n+1}$ of system (\ref{sys3}).\\\\
\noindent {\bf Step 5: Proof of estimate (\ref{2n})}\\
We have,
\begin{eqnarray*}
&&\sum_{i=1}^m\int_{\O}\frac{\Psi_{\e,\ell}(u^{i,n+1})-\Psi_{\e,\ell}(u^{i,n})}{\Delta t}\\
&\le &\sum_{i=1}^m\int_{\O} \left(\frac{u^{i,n+1}-u^{i,n}}{\Delta t}\right)\Psi_{\e,\ell}'(u^{i,n+1})\\
&=&\sum_{i=1}^m\left\langle \frac{u^{i,n+1}-u^{i,n}}{\Delta t},\Psi_{\e,\ell}'(u^{i,n+1})\right\rangle_{H^{-1}(\O)\times H^1(\O)}\\
&= & -\sum_{i=1}^m\left\langle \d T^{\e,\ell}(u^{i,n+1})\n u^{i,n+1}+T^{\e,\ell}(u^{i,n+1})\sum_{j=1}^mA_{ij}\n\rho_{\eta}\star\rho_{\eta}\star u^{j,n+1},\Psi_{\e,\ell}''(u^{i,n+1})\n u^{i,n+1}\right\rangle_{L^2(\O)}\\
&= & -\sum_{i=1}^m\left\{\d\int_{\O} |\n u^{i,n+1}|^2 + \int_{\O}\sum_{j=1}^m\n\rho_{\eta}\star u^{i,n+1} A_{ij}\n\rho_{\eta}\star u^{j,n+1}\right\}\\
&\le & -\sum_{i=1}^m\d\int_{\O} |\n u^{i,n+1}|^2 -\delta_0\sum_{i=1}^m\int_{\O}|\n\rho_{\eta}\star u^{i,n+1}|^2,\\
\end{eqnarray*}
where we have used, in the second line, the convexity inequality on $\Psi_{\e,\ell}$. In the third line, we used the fact that $\displaystyle\frac{u^{i,n+1}-u^{i,n}}{\Delta t}\in H^{-1}(\O)$ and that $\n\Psi'_{\e,\ell}(u^{i,n+1})=\Psi''_{\e,\ell}(u^{i,n+1})\nabla u^{i,n+1}\in L^2(\O)$ because $\Psi'_{\e,\ell}(u^{i,n+1})\in C^1(\mathbb R)$, see \cite[Proposition IX.5, page 155]{Brezis}. Thus,  in the {fourth} line we use
that $u^{i,n+1}$ is a solution for system (\ref{sys3}) where we have applied an integration by parts. In the fifth line, we used the {transposition} of the convolution (see for instance \cite[Proposition IV.16, page 67]{Brezis}), and the fact that $\check{\rho}_\eta(x)=\rho_\eta(-x)=\rho_\eta(x)$. Finally, in the last line we use that $A$ satisfies (\ref{eq+}).
\noindent Then by a straightforward recurrence we get estimate (\ref{2n}). This ends the proof of Proposition 3.2.
$\qedhere$
\end{proof}
\subsection{Passage to the limit as $(\Delta t,\e)\rightarrow (0,0)$}
In this subsection we pass to the limit as $(\Delta t,\e)\rightarrow (0,0)$ in system (\ref{sys3}) to get the existence of a solution for the continuous approximate system (\ref{sys4}) given below.\\
First, let us define the function $\Psi_{0,\ell}$ as
\begin{eqnarray}\label{psil}
\Psi_{0,\ell}(a)-\frac{1}{e}:=
\left\{\begin{array}{cll}\label{psil}
+\infty &\mbox{ if } a<0,\\\\
0 &\mbox{ if } a=0,\\\\
a\ln a &\mbox{ if } 0< a\leq\ell,\\\\
\frac{a^2}{2\ell}+a\ln\ell-\frac{\ell}{2} &\mbox{ if } a>\ell.
\end{array}\right.
\end{eqnarray}
Now let us introduce our continuous approximate system. Assume that $A$ satisfies (\ref{eq+}). Let $u_0=(u^i_0)_{1\leq i\leq m}$ satisfying 
\begin{equation}\label{C_3}
C_3:=\si\int_\O\Psi_{0,\ell}(u^i_0)<+\infty,
\end{equation}
which implies that $u^i_0\geq 0$ a.e. in $\O$ for $i=1,\ldots,m$. Then for all $\ell$, $\eta$, $\delta > 0$, with $1<\ell<+\infty$, we look for a solution $u=(u^i)_{1\leq i\leq m}$ of the following system:
\begin{eqnarray}\label{sys4}
\left\{\begin{array}{rlll}
 u^i_t&=&div\left\{{J^i_{0,\ell,\eta,\delta}(u)} \right\}&\mbox{in } \mathcal{D}'(\O_T),\\
J^i_{0,\ell,\eta,\delta}(u)&=&\displaystyle T^{0,\ell}(u^i)\left\{\sum_{j=1}^m A_{ij}\nabla\rho_{\eta}\star\rho_{\eta}\star u^j + \delta\nabla u^i\right\},&\\
u^i(0,x)&=&u^i_0(x)& \mbox{ in }\O.
\end{array}\right.
\end{eqnarray}
where $T^{0,\ell}$ is given in (\ref{T}) for $\e=0$, and we recall here $\Omega_T:=(0,T)\times\O$.
\begin{pro}\label{th3}(\textbf{Existence for system (\ref{sys4})})\\
Assume that $A$ satisfies (\ref{eq+}). Let $u_0=(u^i_0)_{1\leq i\leq m}$ satisfying (\ref{C_3}). Then for all $\ell$, $\eta$, $\delta > 0$ with $1<\ell<+\infty$ there exists a function $u=(u^i)_{1\leq i \leq m}\in (L^2(0,T;H^1(\Omega))\cap C([0,T);L^2(\O)))^m$, with $u^i\geq 0$ a.e. in $\O_T$, solution of system (\ref{sys4}) that satisfies the following entropy estimate for a.e. $t_1,t_2\in (0,T)$ with $u^i(t_1)=u^i(t_1,\cdot)$
\begin{equation}\label{2eps}
\int_{\Omega}\sum_{i=1}^m \Psi_{0,\ell}(u^i(t_2))+\delta\int_{t_1}^{t_2}\int_\Omega\sum_{i=1}^m\left|\nabla u^i\right|^2 +\delta_0\int_{t_1}^{t_2}\int_\Omega\sum_{i=1}^m\left|\nabla\rho_{\eta}\star u ^i\right|^2\ \leq\int_{\Omega}\sum_{i=1}^m\Psi_{0,\ell}(u^i_0).
\end{equation}
\end{pro}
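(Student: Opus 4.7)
The plan is to obtain the solution as the limit of the fully discrete sequence $(u^{i,n})_{n\ge 0}$ supplied by Proposition~\ref{th2}, passing to the limit $(\Delta t,\epsilon)\to(0,0)$ with $\ell,\eta,\delta>0$ fixed. First I would introduce the piecewise constant interpolant $\bar u^{i}_{\Delta t,\epsilon}(t,\cdot):=u^{i,n+1}$ and the piecewise affine interpolant $\tilde u^{i}_{\Delta t,\epsilon}$ on each slab $t\in(n\Delta t,(n+1)\Delta t]$. Before using the entropy bound (\ref{2n}), I must check that the initial entropy $C_1$ of (\ref{C_1}) is bounded uniformly in $\epsilon$ by $C_3+O(\epsilon)$: on each of the three branches of (\ref{psi2}) a direct pointwise comparison with (\ref{psil}) on $[0,+\infty)$, using $-x\ln x\le 1/e$ on $(0,1]$, gives $\Psi_{\epsilon,\ell}(a)\le \Psi_{0,\ell}(a)+C\epsilon$ for all $a\ge 0$. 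Granted this, (\ref{2n}) yields the uniform bounds
\[
\sup_n\sum_i\int_\Omega \Psi_{\epsilon,\ell}(u^{i,n+1})+\delta\|\nabla\bar u^i_{\Delta t,\epsilon}\|_{L^2(0,T;L^2)}^2+\delta_0\|\nabla\rho_\eta\star \bar u^i_{\Delta t,\epsilon}\|_{L^2(0,T;L^2)}^2\le C.
\]
Since the quadratic tail of $\Psi_{\epsilon,\ell}$ at infinity forces $\bar u^i_{\Delta t,\epsilon}$ to remain bounded in $L^\infty(0,T;L^2(\Omega))$ (with constant depending on $\ell$), we have uniform $L^2(0,T;H^1(\Omega))$ control.

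Next I would secure time compactness. From the flux formula and (\ref{T'}),
\[
\|J^i_{\epsilon,\ell,\eta,\delta}\|_{L^2(0,T;L^2(\Omega))}\le \ell\bigl(\|A\|\,C_0\eta^{-1}\|\bar u\|_{L^2(0,T;L^2)}+\delta\|\nabla \bar u^i\|_{L^2(0,T;L^2)}\bigr)\le C(\ell,\eta,\delta),
\]
using Young's inequality for the convolutions. Hence $\partial_t\tilde u^i_{\Delta t,\epsilon}={\rm div}(J^i_{\epsilon,\ell,\eta,\delta})$ is uniformly bounded in $L^2(0,T;H^{-1}(\Omega))$, and Aubin's lemma (Proposition~\ref{compacite}) applied to $\tilde u^i_{\Delta t,\epsilon}$, together with $\|\bar u-\tilde u\|_{L^2(\Omega_T)}\le \sqrt{\Delta t/3}\,\|\tilde u_t\|_{L^2(0,T;L^2)}\to 0$ type estimates, extracts a subsequence with
\[
\bar u^i_{\Delta t,\epsilon}\longrightarrow u^i\ \text{strongly in } L^2(0,T;L^2(\Omega)),\qquad \bar u^i_{\Delta t,\epsilon}\rightharpoonup u^i\ \text{weakly in } L^2(0,T;H^1(\Omega)).
\]
Alternatively one could invoke the variant Theorem~\ref{lemsimonvar} directly on $\bar u$, since $\|\bar u\|_{\operatorname{Var}([0,T);H^{-1})}\le \sum_n \Delta t\,\|J^{i,n+1}\|_{L^2}\le C\sqrt T$ by Cauchy--Schwarz.

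I would then pass to the limit in the weak form of (\ref{sys3}). The truncation $T^{\epsilon,\ell}(\bar u^i)\to T^{0,\ell}(u^i)$ strongly in $L^2(\Omega_T)$ by dominated convergence (use $|T^{\epsilon,\ell}|\le\ell$ and the uniform convergence $T^{\epsilon,\ell}\to T^{0,\ell}$ on compacts), the smoothed gradients $\nabla\rho_\eta\star\rho_\eta\star \bar u^j$ converge strongly in $L^2$ (convolution transports strong convergence), and $\nabla \bar u^i\rightharpoonup\nabla u^i$ weakly in $L^2$. A strong$\times$weak argument in each product of (\ref{sys2}) then identifies the limiting flux, giving (\ref{sys4}) in $\mathcal{D}'(\Omega_T)$. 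The positivity $u^i\ge 0$ a.e.\ is obtained by Fatou: for any $a<0$ one has $\Psi_{\epsilon,\ell}(a)=\frac{a^2}{2\epsilon}+a\ln\epsilon-\frac{\epsilon}{2}+\frac{1}{e}\to+\infty$ as $\epsilon\to 0$, so the uniform bound above combined with pointwise (a.e.) convergence of a subsequence rules out $u^i<0$ on a set of positive measure. The entropy inequality (\ref{2eps}) finally follows by writing (\ref{2n}) as a pair of Riemann sums between two times $t_1<t_2$ (for a.e.\ choice), using weak lower semicontinuity of $\|\nabla\cdot\|_{L^2}^2$ and of $\int\Psi_{0,\ell}(\cdot)$ (convex, nonnegative, lower semicontinuous) and the initial-entropy comparison from the first paragraph. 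The continuity $u^i\in C([0,T);L^2(\Omega))$ comes from the embedding after Proposition~\ref{compacite}, and the initial data are preserved since the interpolants start at $u_0^i$.

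The main obstacle I expect is handling the limit inside $\int_\Omega\Psi_{\epsilon,\ell}(u^{i,n+1})$ so as to deduce both positivity and the clean entropy bound (\ref{2eps}) with the function $\Psi_{0,\ell}$, given that $\Psi_{\epsilon,\ell}$ is finite on negative arguments whereas $\Psi_{0,\ell}$ is not; the argument sketched above via pointwise blow-up of $\Psi_{\epsilon,\ell}$ on $\{a<0\}$ together with the Fatou/weak l.s.c.\ pairing is what makes this step go through.
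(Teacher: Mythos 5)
Your proposal is correct and follows the same overall architecture as the paper: uniform bounds from the discrete entropy estimate (\ref{2n}), space--time compactness, strong$\times$weak passage to the limit in the flux, and lower semicontinuity/Fatou for (\ref{2eps}) together with the comparison $\Psi_{\e,\ell}\leq\Psi_{0,\ell}+C\e$ on $[0,+\infty)$ (the paper even has $\Psi_{\e,\ell}\leq\Psi_{0,\ell}$ there). The genuine difference is the compactness machinery. You work with the piecewise affine interpolant, bound the flux in $L^2(0,T;L^2(\O))$ so that $\partial_t\tilde u\in L^2(0,T;H^{-1}(\O))$ uniformly, and invoke Aubin's lemma (Proposition \ref{compacite}); the paper instead keeps the piecewise constant interpolant, controls its total variation in $H^{-1}(\O)$, and applies the tailor-made variant of Simon's lemma (Theorem \ref{lemsimonvar}), which requires the extra $L^p(0,T;L^2(\O))$ bound with $p>2$ obtained by interpolating $L^\infty(0,T;L^1)\cap L^2(0,T;L^{2+\alpha(N)})$. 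Your route is arguably simpler at fixed $\ell$, since the quadratic tail of $\Psi_{\e,\ell}$ gives $L^\infty(0,T;L^2(\O))$ directly; the paper's route is the one that survives the later limit $\ell\to\infty$, where that tail degenerates. Your treatment of positivity (blow-up of $\Psi_{\e,\ell}$ on $\{a<0\}$ plus Fatou) is equivalent to the paper's combination of Lemma \ref{conv} with its Step 7.

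One small imprecision: the inequality $\|\bar u-\tilde u\|_{L^2(\O_T)}\leq\sqrt{\Delta t/3}\,\|\tilde u_t\|_{L^2(0,T;L^2)}$ is not available, because $\tilde u_t$ is only bounded in $L^2(0,T;H^{-1}(\O))$, not in $L^2(0,T;L^2(\O))$. What you do get is $\|\bar u-\tilde u\|_{L^2(0,T;H^{-1}(\O))}\leq C\,\Delta t\,\|\tilde u_t\|_{L^2(0,T;H^{-1}(\O))}\to 0$, and you then upgrade to convergence in $L^2(\O_T)$ by interpolating with the uniform $L^2(0,T;H^1(\O))$ bound (e.g. $\|v\|_{L^2(\O)}\leq\|v\|_{H^1(\O)}^{1/2}\|v\|_{H^{-1}(\O)}^{1/2}$). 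With that correction the argument closes.
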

\begin{proof}
Our proof is based on the variant of Simon's Lemma (Theorem \ref{lemsimonvar}). Recall that $\displaystyle\Delta t=\frac{T}{K}$ where $K\in\mathbb N^*$ and $T>0$ is given. We denote by $C$ a generic constant independent of $\Delta t$ and $\e$.
For all $n\in\left\{0,\ldots,K-1\right\}$ and $i=1,\ldots,m$, set
$t_n=n\Delta t$  and let the piecewise continuous function in time:
\begin{equation}\label{inter}
 U^{i,{\Delta t}}(t,x):=u^{i,n+1}(x),\quad\mbox{for } t\in (t_n,t_{n+1}],
 \end{equation}
with $U^{i,{\Delta t}}(0,x):=u^{i}_{0}(x)$ satisfying (\ref{C_1}).\\\\
\noindent {\bf Step 1: Upper bound on $\left\|\textbf{U}^{\Delta t}\right\|_{(L^2(0,T;H^1(\O)))^m}$}\\
We will prove that $U^{\Delta t}=(U^{i,\Delta t})_{1\leq i\leq m}$ satisfies
\begin{eqnarray*}
\int_{0}^{T}\|\n U^{\Delta t}(t)\|_{(L^2(\O))^m}^2 &\le &C.
\end{eqnarray*}
For all $n\in\left\{0,\ldots,K-1\right\}$ and $i=1,\ldots,m$ we have
$$\n U^{i,\Delta t}(t,x)=\n u^{i,n+1}(x), \quad\mbox{for } t\in (t_n,t_{n+1}].$$
Then
\begin{eqnarray*}
\int_{t_n}^{t_{n+1}}\|\n U^{i,\Delta t}(t)\|_{L^2(\O)}^2 &=&\Delta t\|\n u^{i,n+1}\|_{L^2(\O)}^2.
\end{eqnarray*}
Hence
\begin{eqnarray*}
\int_{0}^{T}\|\n U^{\Delta t}(t)\|_{(L^2(\O))^m}^2 &=&\Delta t\sum_{k=0}^{K-1}\|\n
u^{k+1}\|_{(L^2(\O)^m}^2 \\
&\leq& \frac{C_1}{\d},
\end{eqnarray*}
where we have used the entropy estimate (\ref{2n}) with $C_1$ is given in (\ref{C_1}).
Hence, using Poincar\'e-Wirtinger's inequality we can get similarly an upper bound on $\displaystyle\int_0^T\left\|U^{i,\Delta t}\right\|^2_{(L^2(\O))^m}$ independently of $\Delta t$ (using the fact that $\displaystyle\int_\O u^{i,n+1}=\int_\O u^{i,n}=\int_\O u^{i,0}$ by equation (\ref{sys3})) .\\\\
\noindent {\bf Step 2: Upper bound on  $\left\|\textbf{U}^{\Delta t}\right\|_{({\rm Var}([0,T);H^{-1}(\O)))^m}$}\\
We will prove that $$\left\|U^{\Delta t}\right\|_{({\rm Var}([0,T);H^{-1}(\O)))^m}\leq C. $$
We have for $i=1,\ldots,m$
\begin{eqnarray*}
\left\|U^{i,\Delta t}\right\|_{{\rm Var}([0,T);H^{-1}(\O))}&=&\sum_{n=0}^{K-1}\left\|U^{i,\Delta t}(t_{n+1})-U^{i,\Delta t}(t_n)\right\|_{H^{-1}(\O)}\\
&=&\sum_{n=0}^{K-1}\left\|u^{i,n+1}-u^{i,n}\right\|_{H^{-1}(\O)}\\
&=&\Delta t\sum_{n=0}^{K-1}\left\|\frac{u^{i,n+1}-u^{i,n}}{\Delta t}\right\|_{H^{-1}(\O)}\\
&\leq& \Delta t \sum_{n=0}^{K-1}\left\|T^{\e,\ell}(u^{i,n+1})\left(\sum_{j=1}^m A_{ij}
\nabla\rho_\eta\star\rho_\eta\star u^{j,n+1}+\d\nabla u^{i,n+1}\right)\right\|_{L^2(\O)}\\
&\leq& \ell\Delta t \sum_{n=0}^{K-1}\left\{\left\|A\right\|_{\infty}\sj\left\|\nabla\rho_\eta\star u^{j,n+1}\right\|_{L^2(\O)}+\d\left\|\nabla u^{i,n+1}\right\|_{L^2(\O)}\right\}\\
&\le& C,
\end{eqnarray*}
where 
\begin{equation}\label{norminf}
\displaystyle\left\|A\right\|_{\infty}=\max_{1\leq i\leq m}\sj \left|A_{ij}\right|,
\end{equation}
 and we have used in the last inequality the entropy estimate (\ref{2n}), and the fact that 
$$\Delta t\sum_{n=0}^{K-1}\left\|\nabla u^{i,n+1}\right\|_{L^2(\O)}\leq\sqrt{T}\left(\Delta t\sum_{n=0}^{K-1}\left\|\nabla u^{i,n+1}\right\|^2_{L^2(\O)}\right)^{\frac{1}{2}}.$$\\
\textbf{Step 3:} $\textbf{U}^{\textbf{i},\Delta t}\in \textbf{L}^{\textbf{p}}(\textbf{0},\textbf{T},\textbf{L}^\textbf{2}(\Omega))$ \textbf{with} $\textbf{p}>2$\\
The estimate (\ref{2n}) gives us that $U^{i,\Delta t} \in L^{\infty}(0,T;L^{1}(\Omega))\ \cap\ L^2(0,T;H^1(\Omega))$ for $i=1,\ldots,m$. Using Sobolev injections we get $H^1(\Omega)\hookrightarrow L^{2+\alpha(N)}(\Omega)$, with $\alpha(N)>0$, and then $U^{i,\Delta t}\in L^2(0,T;L^{2+\alpha(N)}(\Omega))$. Hence by interpolation, we find that $U^{i,\Delta t}\in L^{p}(0,T;L^{2}(\Omega))$ with $\displaystyle \left(\frac{1}{p},\frac{1}{2}\right)=(1-\theta)\left(\frac{1}{\infty},\frac{1}{2}\right)+\theta\left(\frac{1}{2},\frac{1}{2+\alpha(N)}\right)$ and $\theta\in (0,1)$, i.e. for
\begin{equation}\label{pp}
\displaystyle p=\frac{4+4\alpha(N)}{2+\alpha(N)}>2.
\end{equation}\\
\textbf{Step 4: Passage to the limit as $(\Delta \textbf{t},\e)\rightarrow (0,0)$}\\
By Steps 1,2 and 3 we have
$$\left\|U^{i,\Delta t}\right\|_{L^{p}(0,T;L^{2}(\Omega))}+\left\|U^{i,\Delta t}\right\|_{L^1(0,T;H^1(\Omega))}+\left\|{U}^{i,\Delta t}\right\|_{{\rm Var}([0,T);H^{-1}(\Omega))}\leq C.$$
Then by noticing that $H^1(\Omega)\stackrel{compact}{\hookrightarrow}L^2(\Omega)\stackrel{continous}{\hookrightarrow} H^{-1}(\Omega)$, and applying the variant of Simon's Lemma (Theorem \ref{lemsimonvar}), we deduce that $(U^{i,\Delta t})_{\Delta t}$ is relatively compact in $L^2(0,T;L^2(\Omega))$, and there exists a function $U=(U^i)_{1\leq i\leq m}\in (L^2(0,T; H^1(\O)))^m$ such that, as $(\Delta t,\e)\rightarrow (0,0)$, we have (up to a subsequence)
$$U^{i,\Delta t} \rightarrow U^i  \mbox{ strongly in } L^2(0,T; L^2(\O)).$$
By Step 1, we have $\n U^{i,\Delta t} \rightharpoonup \n U^i \mbox{ weakly in } L^2(0,T; L^2(\O))$. Now system (\ref{sys3}) can be written as 
\begin{equation}\label{jdid2}
\frac{U^{i,\Delta t}(t+\Delta t)-U^{i,\Delta t}(t)}{\Delta t}={\rm div} \left\{J^{i}_{\e,\ell,\eta,\d}(U^{i,\Delta t}(t+\Delta t),U^{i,\Delta t}(t+\Delta t))\right\} \quad\mbox{ in } \mathcal{D}'(\O_T).
\end{equation}
Multiplying this system by a test function in $\mathcal D(\O_T)$ and integrating over $\Omega_T$, we can pass directly to the limit as $(\Delta t,\e)\rightarrow (0,0)$ in (\ref{jdid2}) to get
$$U^{i}_t={\rm div} \left(T^{0,\ell}(U^{i})\left(\overset{m}{\underset{j=1}{\sum}} A_{ij}\nabla\rho_\eta\star\rho_\eta\star U^{j}+\delta\nabla U^{i}\right)\right)\quad\mbox{ in }\mathcal D'(\O_T),$$
where we used the weak $L^2$ - strong $L^2$ convergence in the products such $T^{\e,\ell}(U^{i,\Delta t})\n U^{i,\Delta t}$ to get the existence of a solution of system (\ref{sys4}). \\\\
\noindent \textbf{Step 5: Recovering the initial condition}\\
Let $\bar \rho \in C^\infty_c(\mathbb R)$ with $\bar \rho\ge 0$, $\int_\mathbb R \bar \rho =1$ and $\mbox{supp}\ \bar \rho \subset (-\frac{1}{2},\frac{1}{2})$.
We set
$$\bar \rho_{\Delta t} (t)= {\Delta t}^{-1}\bar \rho({\Delta t}^{-1}t),\mbox{ with } \bar{\rho}(t)=\bar{\rho}(-t).$$
Then we have 
\begin{eqnarray*}
&&\left\| U^{\Delta t}_t\star\bar\rho_{\Delta t}\right\|^2_{(L^2(0,T;H^{-1}(\O)))^m}=\si\int_0^T\left\|\sum_{n=0}^{K-1}(u^{i,n+1}-u^{i,n})\d_{t_{n+1}}\star \bar\rho_{\Delta t}\right\|^2_{H^{-1}(\O)}\\
&=&\si\sum_{n=0}^{K-1}\int_0^T\left(\Delta t\bar\rho_{\Delta t}(t-t_{n+1})\right)^2\left\|\frac{u^{i,n+1}-u^{i,n}}{\Delta t}\right\|^2_{H^{-1}(\O)}\\
&=&C_4\Delta t\si\sum_{n=0}^{K-1}\left\|\frac{u^{i,n+1}-u^{i,n}}{\Delta t}\right\|^2_{H^{-1}(\O)}\\
&\leq& C_4\Delta t \sum_{n=0}^{K-1}\si\int_{\O}\left|T^{\e,\ell}(u^{i,n+1})\left(\sum_{j=1}^m A_{ij}
\nabla\rho_\eta\star\rho_\eta\star u^{j,n+1}+\d\nabla u^{i,n+1}\right)\right|^2\\
&\leq&2\  C_4\ell^2\Delta t \sum_{n=0}^{K-1}\si\int_{\O}\left\{\left(\sum_{j=1}^m 
A_{ij}\nabla\rho_\eta\star\rho_\eta\star u^{j,n+1}\right)^2+\d^2\left|\nabla u^{i,n+1}\right|^2\right\}\\
&\leq&2\ C_4\ell^2\Delta t \sum_{n=0}^{K-1}\int_{\O}\left\{\left\|A\right\|^2 
\left\|\nabla\rho_\eta\star\rho_\eta\star u^{n+1}\right\|^2_{(L^2(\O))^m}+\d^2\left\|\nabla u^{n+1}\right\|^2_{(L^2(\O))^m}\right\}\\
&\le & 2\ C_4\ell^2 \Delta t \sum_{n=0}^{K-1}\left\{\|A\|^2\|\n\rho_{\eta}\star u^{n+1}\|^2_{({\lt})^m}+\d^2\left\|\n u^{n+1}\right\|^2_{(\lt)^m}\right\}\\
&\le&  2\ C_4\ell^2 C_1\left(\frac{\left\|A\right\|^2}{\d_0}+\d\right)\le  2\ C_4\ell^2 C_3\left(\frac{\left\|A\right\|^2}{\d_0}+\d\right),
\end{eqnarray*}
where $\delta_{t_{n+1}}$ is Dirac mass in $t=t_{n+1}$, $C_1$ as in (\ref{C_1}), $C_3$ as in (\ref{C_3}), $C_4:=\int_0^T\bar\rho(t)\,dt$, and we have used in the last line the entropy estimate (\ref{2n}).
Clearly, $\bar\rho_{\Delta t}\star U^{i,\Delta t}_t\rightharpoonup U^i_t$ weakly in $L^2(0,T;H^{-1}(\O))$ as $(\Delta t,\e) \rightarrow 0$.
Similarly we have that $\bar\rho_{\Delta t}\star U^{i,\Delta t}\rightarrow U^i$ strongly in $L^2(0,T;L^2(\O))$ since $U^{i,\Delta t}\rightarrow U^i$ in $L^2(0,T;L^2(\O))$. Then we deduce that $U^i\in\left\{g\in L^2(0,T;H^1(\O)); g_t\in L^2(0,T;H^{-1}(\O))\right\}$. And now $U^i(0,x)$ has sense, by Proposition \ref{compacite}, and we have that $U^i(0,x)=u^i_0(x)$ by Proposition \ref{initial}.\\\\
\noindent \textbf{Step 6: \textbf{Proof of estimate (\ref{2eps})}}\\
By Step 4, there exists a function $U^i\in{L^2(0,T;H^1(\O))}$ such that the following holds true as $(\Delta t,\e)\rightarrow (0,0)$
$$
\left\{\begin{array}{lll}
U^{i,\Delta t} &\rightarrow& U^i\\
\n U^{i,\Delta t}&\rightharpoonup &\n U^i \\ 
\n\rho_\eta\star U^{i,\Delta t}&\rightarrow &\n \rho_\eta\star U^i
\end{array}\right|\quad\mbox{in }\  L^2(0,T;L^2(\O)).
$$
Now using the fact that the norm $L^2$ is weakly lower semicontinuous, with a sequence of integers $n_2$ (depending on $\Delta t$) such that $t_{n_2+1}\rightarrow t_2\in(0,T)$ and $$U^{i,\Delta t}(t_2)=U^{i,\Delta t}(t_{n_2+1})=u^{n_2+1},$$ we get for $t_1<t_2$
\begin{equation}\label{1}
\int_{t_1}^{t_2} \int_\O \left|\n U^i\right|^2\leq\int_0^{t_2} \int_\O \left|\n U^i\right|^2\leq\displaystyle \liminf_{(\Delta t,\e)\rightarrow (0,0)} \int_0^{t_{n_2+1}} \int_\O \left|\n U^{i,\Delta t}\right|^2=\displaystyle \liminf_{(\Delta t,\e)\rightarrow (0,0)} \Delta t\sum_{k=0}^{n_2}\int_{\O}|\n u^{i,k+1}|^2,
\end{equation}
and
\begin{equation}\label{2}
\int_{t_1}^{t_2} \int_\O \left|\n\rho_{\eta}\star U^i\right|^2\leq\int_0^{t_2} \int_\O \left|\n\rho_{\eta}\star U^i\right|^2\leq\displaystyle \liminf_{(\Delta t,\e)\rightarrow (0,0)} \Delta t\sum_{k=0}^{n_2}\int_{\O}|\n\rho_\eta\star u^{i,k+1}|^2.
\end{equation}
Moreover, since we have $U^{i,\Delta t}\rightarrow U^i$ in $L^2(0,T;L^2(\O))$, we get that for a.e. $t\in (0,T)$ (up to a subsequence) $U^{i,\Delta t}(t,\cdot)\rightarrow U^i(t,\cdot)$ in $L^2(\O)$. For such $t$ we have (up to a subsequence) $U^{i,\Delta t}(t,\cdot)\rightarrow U^i(t,\cdot)$ for a.e. in $\O$. Moreover, by applying Lemma \ref{conv} we get that for a.e. $t\in (0,T)$
\begin{equation}\label{3}
\Psi_{0,\ell}(U^{i}(t))\leq \liminf_{(\Delta t,\e)\rightarrow (0,0)} \Psi_{\e,\ell}(U^{i,\Delta t}(t)).
\end{equation}
Integrating over $\O$ then applying Fatou's Lemma we get for a.e. $t_1<t_2$
\begin{equation}\label{5}
\sum_{i=1}^m\int_{\O}\Psi_{0,\ell}(U^{i}(t_2)) \leq \int_{\O}\liminf_{(\Delta t,\e)\rightarrow (0,0)}\sum_{i=1}^m\Psi_{\e,\ell}(U^{i,\Delta t}(t_2))\leq\liminf_{(\Delta t,\e)\rightarrow (0,0)}\sum_{i=1}^m\int_{\O}\Psi_{\e,\ell}(u^{i,n_2+1}).
\end{equation}
(\ref{1}),(\ref{2}) and (\ref{5}) with the entropy estimate (\ref{2n}) give us that for a.e. $t_1< t_2 \in (0,T)$
\begin{eqnarray*}
&&\displaystyle\si\int_{\O}\Psi_{\e,\ell}(U^{i}(t_2))+\d\si\int_{t_1}^{t_2} \int_\O \left|\n U^i\right|^2+\d_0\si\int_{t_1}^{t_2} \int_\O \left|\n\rho_{\eta}\star U^i\right|^2\\
 &\leq&\displaystyle\liminf_{(\Delta t,\e)\rightarrow (0,0)}\sum_{i=1}^m\int_{\O}\Psi_{\e,\ell}(u^{i,n_2+1})+\liminf_{(\Delta t,\e)\rightarrow (0,0)} \d\Delta t\si\sum_{k=0}^{n_2}\int_{\O}|\n u^{i,k+1}|^2\\
&&+\liminf_{(\Delta t,\e)\rightarrow (0,0)} \d_0\Delta t\si\sum_{k=0}^{n_2}\int_{\O}|\n \rho_\eta\star u^{i,k+1}|^2\\
&\leq&\displaystyle\sum_{i=1}^m \int_{\O} \Psi_{\e,\ell}(u^{i}_0)\leq\displaystyle\sum_{i=1}^m \int_{\O} \Psi_{0,\ell}(u^{i}_0),
\end{eqnarray*}
which is estimate (\ref{2eps}).\\\\
\noindent\textbf{Step 7: Non-negativity of $\textbf{U}^\textbf{i}$} \\
Let $\O^\e:=\left\{U^{i,\Delta t}\leq\e\right\}$. By estimate (\ref{2n}), there exists a positive
constant $C$ independent of $\e$ and $\Delta t$ such that for all $i=1,\ldots,m$ we have
\begin{eqnarray*}
C &\ge &
\int_{\O}\Psi_{\e,\ell}(U^{i,\Delta t})\\
&\geq&
\int_{\O^\e}\Psi_{\e,\ell}(U^{i,\Delta t})\\
&=&
\int_{\O^\e}\frac{1}{e}+\frac{(U^{i,\Delta t})^2}{2\e} +U^{i,\Delta t}\ln\e-\frac{\e}{2}\\
&\ge &\int_{\O^\e}\frac{1}{e}+\frac{(U^{i,\Delta t})^2}{2\e} +\e\ln\e-\frac{1}{2}\\
&\ge &\int_{\O^\e}\frac{(U^{i,\Delta t})^2}{2\e}-\frac{1}{2},
\end{eqnarray*}
i.e.
\begin{equation}\label{4}
\displaystyle \int_{\O^\e}\frac{(U^{i,\Delta t})^2}{2\e}\leq C+\frac{1}{2}.
\end{equation}
Now by passing to the limit as $(\Delta t,\e)\rightarrow (0,0)$ in (\ref{4}) we deduce that $\displaystyle\int_{\O^-}\left|U^{i}\right|^2=0$, where $\O^-:=\left\{U^{i}\leq 0\right\}$, which gives us that $(U^{i})^-=0$ in $L^2(\O)$, where $(U^{i})^-=\min(0,U^{i})$.
$\qedhere$
\end{proof}
\begin{rem}(\textbf{Another method following \cite{cont}})\\
Note that it would be also possible to use a theorem in Lions-Magenes \cite[Chap. 3, Theorem 4.1, page 257]{cont}. This would prove in particular the existence of a unique solution for the following system:
\begin{eqnarray}\label{sys2**}
\left\{\begin{array}{clll}
\displaystyle{u^i_t} &=& {\rm div} \left\{J_{\epsilon,\ell,\eta,\delta}^{i}(v,u)\right\}&\quad\mbox{ in}\quad {\mathcal D}'(\O_T),\\
J_{\epsilon,\ell,\eta,\delta}^{i}(v,u)&=&T^{\epsilon,\ell}(v^{i})\left\{\overset{m}{\underset{j=1}{\sum}} A_{ij}\nabla\rho_\eta\star\rho_\eta\star u^{j}+\delta\nabla u^{i}\right\},&\\
u^i(0,x)&=&u^i_0(x)& \quad\mbox{ in }\quad \O,
\end{array}\right.
\end{eqnarray}
 where $T^{\e,\ell}$ is given in (\ref{T}).\\It would then be possible to find a fixed point solution $v=u$ of (\ref{sys2**}) to recover a solution of (\ref{sys4}). We would have to justify again the entropy inequality (\ref{2eps}).
\end{rem}
\subsection{Passage to the limit as $(\ell,\eta)\rightarrow (\infty,0)$}
In this subsection we pass to the limit as $(\ell,\eta)\rightarrow (\infty,0)$ in system (\ref{sys4}) to get the existence of a solution for system (\ref{sys5}) given below (system independent of $\ell$ and $\eta$).\\
Let us introduce the system independant of $\ell$ and $\eta$. Asume that $A$ satisfies (\ref{eq+}). Let $u_0=(u^i_0)_{1\leq i\leq m}$ satisfying (\ref{psi0}). Then for all $\d >0$ we look for a solution $u=(u^i)_{1\leq i\leq m}$ of the following system:
\begin{eqnarray}\label{sys5}
\left\{\begin{array}{clll}
u^i_t&=&div\left\{{\displaystyle u^i\sum_{j=1}^m A_{ij}\nabla u^j}+\d u^i\n u^i \right\}&\quad\mbox{in}\quad \mathcal{D}'(\O_T),\\
u^i(0,x)&=&u_0^i(x) &\quad\mbox{a.e. in}\quad \Omega.
\end{array}\right.
\end{eqnarray}
\begin{pro}\label{th7}(\textbf{Existence for system (\ref{sys5})})\\
Assume that $A$ satisfies (\ref{eq+}). Let $u_0=(u^i_0)_{1\leq i\leq m}$ satisfying (\ref{psi0}). Then for all $\delta > 0$ there exists a function $u=(u^i)_{1\leq i \leq m}\in (L^2(0,T;H^1(\Omega))\cap C([0,T);(W^{1,\infty}(\O))'))^m$, with $u^i\geq 0$ a.e. on $\O_T$, solution of system (\ref{sys5}), that satisfies the following entropy estimate for a.e. $t_1, t_2\in (0,T)$ with $u^i(t_2)=u^i(t_2,.)$:
\begin{equation}\label{2eta}
\int_{\Omega}\sum_{i=1}^m \Psi(u^i(t_2))+\delta\int_{t_1}^{t_2}\int_{\Omega}\sum_{i=1}^m\left|\nabla u^i\right|^2 +\delta_0\int_{t_1}^{t_2}\int_{\Omega}\sum_{i=1}^m\left|\nabla u ^i\right|^2\leq\int_{\Omega}\sum_{i=1}^m\Psi(u^i_0),
\end{equation}\\
with $\Psi$ is given in (\ref{Psi}).
\end{pro}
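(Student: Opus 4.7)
The plan is to mimic the strategy used in the proof of Proposition \ref{th3}: consider sequences $\ell_n \to \infty$ and $\eta_n \to 0$, let $u_n = (u_n^i)_{1 \le i \le m}$ denote the corresponding solution of system (\ref{sys4}) delivered by Proposition \ref{th3}, and extract compactness–weak limits. The entropy estimate (\ref{2eps}) is the central tool. Since $\Psi_{0,\ell}(a) \ge \Psi(a)$ on $[0,\infty)$ (observe that $\Psi_{0,\ell}$ is the second-order Taylor approximation to the convex function $\Psi$ at the point $\ell$, with matching value and first and second derivatives there, which in turn is easily seen to be non-increasing in $\ell$ with monotone limit $\Psi$), the right-hand side $\sum_i \int_\O \Psi_{0,\ell_n}(u_0^i)$ is controlled uniformly in $n$ by $\sum_i\int_\O\Psi(u_0^i) <+\infty$. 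This yields uniform bounds on $\sqrt{\delta}\,\n u_n^i$ and $\sqrt{\delta_0}\,\n\rho_{\eta_n}\star u_n^i$ in $L^2(\O_T)$. Combined with mass conservation $\int_\O u_n^i(t)=\int_\O u_0^i$ and Poincar\'e--Wirtinger, this gives $(u_n^i)_n$ bounded in $L^2(0,T;H^1(\O))$ and, by interpolation between $L^\infty(0,T;L^1(\O))$ and the Sobolev-embedded $L^2(0,T;L^{2+\alpha(N)}(\O))$, also bounded in $L^p(0,T;L^2(\O))$ for some $p>2$, exactly as in Step 3 of the proof of Proposition \ref{th3}.

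To apply the variant of Simon's Lemma (Theorem \ref{lemsimonvar}) with $X=H^1(\O)$, $B=L^2(\O)$, $Y=(W^{1,\infty}(\O))'$, it remains to estimate $\partial_t u_n^i$ in $Y$. Testing equation (\ref{sys4}) against $\v\in W^{1,\infty}(\O)$ and using $T^{0,\ell_n}(u_n^i)\le u_n^i$ together with $\|\rho_{\eta_n}\star\rho_{\eta_n}\star\n u_n^j\|_{L^2}\le\|\n u_n^j\|_{L^2}$, I obtain
$$|\langle\partial_t u_n^i,\v\rangle| \le \|\n\v\|_{L^\infty}\,\|u_n^i\|_{L^2(\O)}\left(\|A\|\sj\|\n u_n^j\|_{L^2(\O)}+\delta\,\|\n u_n^i\|_{L^2(\O)}\right),$$
which after Cauchy--Schwarz in $t$ yields a bound on $\partial_t u_n^i$ in $L^1(0,T;(W^{1,\infty}(\O))')$ uniform in $n$. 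Simon's variant then produces a subsequence (not relabelled) with $u_n^i \to u^i$ strongly in $L^2(\O_T)$ and $\n u_n^i\rightharpoonup\n u^i$ weakly in $L^2(\O_T)$; the limit inherits $u^i\ge 0$ a.e.\ from $u_n^i\ge 0$.

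The heart of the proof is the passage to the limit in the nonlinear flux. The inequality
$$|T^{0,\ell_n}(u_n^i)-u^i|\le|u_n^i-u^i|+(u^i-\ell_n)^+$$
and dominated convergence (since $u^i\in L^2(\O_T)$) give $T^{0,\ell_n}(u_n^i)\to u^i$ strongly in $L^2(\O_T)$. For the convolution, writing $\n\rho_{\eta_n}\star\rho_{\eta_n}\star u_n^j=\rho_{\eta_n}\star\rho_{\eta_n}\star\n u_n^j$, testing against $\v\in L^2(\O_T)$ and using the symmetry $\check\rho_{\eta_n}=\rho_{\eta_n}$ reduces the question to pairing the strong convergence $\rho_{\eta_n}\star\rho_{\eta_n}\star\v\to\v$ in $L^2(\O_T)$ with the weak convergence $\n u_n^j\rightharpoonup\n u^j$ in $L^2(\O_T)$, producing $\n\rho_{\eta_n}\star\rho_{\eta_n}\star u_n^j \rightharpoonup \n u^j$ weakly in $L^2(\O_T)$. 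The product of a strongly and a weakly convergent $L^2(\O_T)$ sequence converges in $\mathcal D'(\O_T)$ to the product of the limits, so the limiting flux is $u^i(\sj A_{ij}\n u^j+\delta\n u^i)$ and $u$ solves (\ref{sys5}) in $\mathcal D'(\O_T)$.

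Finally, for the entropy estimate (\ref{2eta}) I would pass to the limit in (\ref{2eps}): weak lower semicontinuity of the $L^2$ norm handles the two gradient terms; combining $\Psi_{0,\ell_n}(a)\ge\Psi(a)$ with a.e.\ pointwise convergence of $u_n^i(t_2)$ (along a further subsequence), continuity of $\Psi$ on $[0,\infty)$ and Fatou yields $\int_\O\Psi(u^i(t_2))\le\liminf_n\int_\O\Psi_{0,\ell_n}(u_n^i(t_2))$; and monotone convergence $\Psi_{0,\ell}(a)\downarrow\Psi(a)$ as $\ell\to\infty$ handles the right-hand side. The $L^1(0,T;(W^{1,\infty})')$ control on $\partial_t u^i$ promotes $u^i$ to $C([0,T);(W^{1,\infty}(\O))')$ and allows $u^i(0,\cdot)=u_0^i$ to be recovered by an analogue of Step 5 in the proof of Proposition \ref{th3}. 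The principal difficulty lies in the simultaneous passage $\ell_n\to\infty$, $\eta_n\to 0$ inside the product $T^{0,\ell_n}(u_n^i)\cdot\n\rho_{\eta_n}\star\rho_{\eta_n}\star u_n^j$: producing the strong compactness needed to identify this limit forces a time-derivative bound uniform in both parameters, which is precisely why one must work in the weak dual $(W^{1,\infty})'$ rather than in the more natural $H^{-1}$.
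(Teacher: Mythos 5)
Your argument follows the paper's own route almost step for step: the uniform bounds coming from the entropy estimate (\ref{2eps}), the $L^1(0,T;(W^{1,\infty}(\O))')$ bound on $\partial_t u_n^i$, Simon-type compactness in $L^2(\O_T)$, the weak--strong passage to the limit in the flux, and lower semicontinuity/Fatou/monotone convergence for (\ref{2eta}). Two points need repair, though both are local. First, the inequality you use to control the right-hand side of (\ref{2eps}) goes the wrong way: since $\Psi_{0,\ell}\downarrow\Psi$ as $\ell\to\infty$ (as you yourself note at the end), one has $\Psi_{0,\ell}\geq\Psi$, so $\sum_i\int_\O\Psi_{0,\ell_n}(u_0^i)$ is \emph{not} bounded above by $\sum_i\int_\O\Psi(u_0^i)$. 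The uniform-in-$n$ bound instead follows from the monotonicity $\int_\O\Psi_{0,\ell_n}(u_0^i)\le\int_\O\Psi_{0,\ell_1}(u_0^i)$, which requires the finiteness of $\int_\O\Psi_{0,\ell}(u_0^i)$ for some $\ell$, i.e.\ hypothesis (\ref{C_3}) rather than (\ref{psi0}) alone (this is a gap the paper itself glosses over when chaining Proposition \ref{th3} into Proposition \ref{th7}; under (\ref{psi0}) only, one should first approximate $u_0$). Second, an $L^1(0,T;(W^{1,\infty}(\O))')$ bound on $\partial_t u_n^i$ is enough for compactness but \emph{not} for recovering the initial condition in the limit: Proposition \ref{initial} requires a uniform $L^q$ bound with $q>1$ to get equicontinuity of $t\mapsto u_n^i(t)$ uniformly in $n$. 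You already have the ingredients to fix this — H\"older in time between your $L^p(0,T;L^2(\O))$ bound ($p>2$ from (\ref{pp})) and the $L^2(0,T;L^2(\O))$ gradient bounds gives $\|\partial_t u_n^i\|_{L^q(0,T;(W^{1,\infty}(\O))')}\le C$ with $q=2p/(p+2)>1$, exactly as in Step 3 of the paper's proof. A cosmetic remark: with a time-derivative bound in $L^1(0,T;Y)$ the relevant compactness tool is the original Simon lemma (Lemma \ref{lemsimon}) rather than the variation-norm variant (Theorem \ref{lemsimonvar}), but either applies here. Your treatment of $T^{0,\ell_n}(u_n^i)\to u^i$ via $|T^{0,\ell_n}(u_n^i)-u^i|\le|u_n^i-u^i|+(u^i-\ell_n)^+$ is in fact cleaner than the paper's.
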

\begin{proof}
Let $C$ be a generic constant independent of $\ell$ and $\eta$, and $u^{\ell}:=(u^{i,\ell})_{1\leq i\leq m}$ a solution of system (\ref{sys4}), where we drop the indices $\eta$ and $\delta$ to keep light notations. The proof is accomplished by passing to the limit as $(\ell,\eta)\rightarrow (\infty,0)$ in (\ref{sys4}) and using Simon's lemma (Lemma \ref{lemsimon}), in order to get the existence result.\\\\
\textbf{Step 1: Upper bound on} $\textbf{u}^{\textbf{i},\ell}_t$
\begin{eqnarray*}
&&\si\|{u}^{i,\ell}_t\|_{L^1(0,T;\left(W^{1,\infty}(\Omega))'\right)}\\
&=&\si \int^{T}_{0}\left\|div\left\{T^{0,\ell}(u^{i,\ell})\left(\sum_{j=1}^m A_{ij}\nabla\rho_{\eta}\star\rho_{\eta}\star u^{j,\ell} + \delta \nabla u^{i,\ell}\right)\right\}\right\|_{(W^{1,\infty}(\Omega))'}\\
&\leq&\si \int^{T}_{0}\int_{\O}\left|T^{0,\ell}(u^{i,\ell})\left(\sum_{j=1}^m A_{ij}\nabla\rho_\eta\star\rho_\eta\star u^{j,\ell}+\d \nabla u^{i,\ell}\right)\right|\\
&\leq& \si\int^{T}_{0}\int_{\O}\left|u^{i,\ell}\right|\left\{\left|\sum_{j=1}^m A_{ij}\nabla\rho_\eta\star\rho_\eta\star u^{j,\ell}\right|+\d \left|\nabla u^{i,\ell}\right|\right\}\\
&\leq& 	\frac{\left(1+\d\right)}{2}\left\|u^{\ell}\right\|^2_{(L^2(0,T;L^2(\O)))^m}+\frac{\left\|A\right\|^2}{2}\left\|\n \rho_\eta\star u^{\ell}\right\|^2_{(L^2(0,T;L^2(\O)))^m}+\frac{\d}{2}\left\|\n u^{\ell}\right\|^2_{(L^2(0,T;L^2(\O)))^m}\nonumber \\
&\leq& C,
\end{eqnarray*}
where we have used in the last inequality the entropy estimate (\ref{2eps}), which is also valid for $t_1=0$ and $t_2=T$, and Poincarr\'e-Wirtinger's inequality.\\\\
\textbf{Step 2: Passage to the limit as $(\ell,\eta)\rightarrow (\infty,0)$}\\
As in Step 3 of the proof of Proposition \ref{th3}, estimate (\ref{2eps}) gives us that $u^{i,\ell}\in L^{p}(0,T,L^2(\Omega))$ with $p$ is given in (\ref{pp}) and then
$$\left\|u^{i,\ell}\right\|_{L^{p}(0,T;L^{2}(\Omega))}+\left\|u^{i,\ell}\right\|_{L^1(0,T;H^1(\Omega))}+\left\|{u}^{i,\ell}_t\right\|_{L^1(0,T;(W^{1,\infty}(\Omega))')}\leq C.$$
Then by noticing that $H^1(\Omega)\stackrel{compact}{\hookrightarrow}L^2(\Omega)\stackrel{continous}{\hookrightarrow} (W^{1,\infty}(\Omega))'$, and applying Simon's Lemma (Lemma 2.3), we deduce that $(u^{i,\ell})_{\ell}$ is relatively compact in $L^2(0,T;L^2(\Omega))$, and there exists a function $u^i\in L^2(0,T; H^1(\O))$ such that, as $(\ell,\eta)\rightarrow (\infty,0)$, we have (up to a subsequence)
$$u^{i,\ell} \rightarrow u^i\mbox{ strongly in } L^2(0,T; L^2(\O)).$$
In addition, since $u^{i,\ell}\rightarrow u^i$ a.e., $u^i$ is nonnegative a.e. hence $T^{0,\ell}(u^{i,\ell}) \rightarrow u^i$ strongly in $L^2(0,T; L^2(\O))$.
Multiplying system (\ref{sys4}) by a test function in $\mathcal D(\O_T)$ and integrating over $\O_T$ we can pass
directly to the limit as $(\ell,\eta)\rightarrow (\infty,0)$, and we get 
$$u^{i}_t={\rm div} \left\{u^{i}\overset{m}{\underset{j=1}{\sum}} A_{ij}\nabla u^{j}+\d u^i\n u^i\right\}\quad\mbox{ in }\mathcal D'(\O_T).$$
where we used in particular the weak $L^2$ - strong $L^2$ convergence in the products such $T^{0,\ell}(u^{i,\ell})\n u^{i,\ell}$. Therefore, $u=(u^i)_{1\leq i\leq m}$ is a solution of system (\ref{sys5}).\\\\
\textbf{Step 3: Recovering the initial condition}\\
First of all, let $\displaystyle q=\frac{2p}{p+2}>1$, where $p>2$ is given in Step 1 of this proof.
It remains to prove that for $i=1,\ldots,m,~~\displaystyle\left\|u^{i,\ell}_t\right\|_{L^q(0,T;(W^{1,\infty})'(\O))}< C.$
We have
\begin{eqnarray*}
&&\|{u}^{i,\ell}_t\|_{L^q(0,T;\left(W^{1,\infty}(\Omega))'\right)} =
\left(\int^{T}_{0}\left\|u^{i,\ell}_t\right\|^q_{(W^{1,\infty}(\Omega))'}\right)^{\frac{1}{q}}\\
&=& \left(\int^{T}_{0}\left\|div\left\{T^{0,\ell}(u^{i,\ell})\left(\sum_{j=1}^m A_{ij}\nabla\rho_{\eta}\star\rho_{\eta}\star u^{j,\ell} + \delta \nabla u^{i,\ell}\right)\right\}\right\|^q_{(W^{1,\infty}(\Omega))'}\right)^{\frac{1}{q}}\\
&\leq& \left(\int^{T}_{0}\left(\int_{\O}\left|T^{0,\ell}(u^{i,\ell})\left(\sum_{j=1}^m A_{ij}\nabla\rho_\eta\star\rho_\eta\star u^{j,\ell}+\d \nabla u^{i,\ell}\right)\right|\right)^q\right)^{\frac{1}{q}}\\
&\leq& \left(\int^{T}_{0}\left(\int_{\O}\left|u^{i,\ell}\right|\left(\left|\sum_{j=1}^m A_{ij}\nabla\rho_\eta\star\rho_\eta\star u^{j,\ell}\right|+\d \left|\nabla u^{i,\ell}\right|\right)\right)^q\right)^{\frac{1}{q}}\\
&\leq& \left\|u^{i,\ell}\right\|_{L^p(0,T;L^2(\O))}\left\|\sj A_{ij}\n\rho_\eta\star\rho_\eta\star u^{j,\ell}\right\|_{L^2(0,T;L^2(\O))}+\d\left\|u^{i,\ell}\right\|_{L^p(0,T;L^2(\O))}\left\|\n u^{i,\ell}\right\|_{L^2(0,T;L^2(\O))}\\
&\leq& \left\|u^{i,\ell}\right\|_{L^p(0,T;L^2(\O))}\left(\left\|A\right\|_{\infty}\sj \left\| \n\rho_\eta\star u^{j,\ell}\right\|_{L^2(0,T;L^2(\O))}+\d \left\|\n u^{i,\ell}\right\|_{L^2(0,T;L^2(\O))}\right)\leq C,
\end{eqnarray*}
where we have used in the fifth line Holder's inequality (since we have $\displaystyle\frac{1}{q}=\frac{1}{p}+\frac{1}{2}$) and in the last line the entropy estimate (\ref{2eps}) and Step 1 of this proof.
Moreover, since $W^{1,1}(0,T;(W^{1,\infty}(\O))')\hookrightarrow C([0,T);(W^{1,\infty}(\O))')$ then $u^i(0,x)$ makes sense and $u^i(0,x)=u^{i}_0(x)$ for all $i=1,\ldots,m$, by Proposition \ref{initial}.\\\\
\noindent\textbf{Step 5: Proof of the estimate (\ref{2eta})}\\
The proof is similar to Step 6 of the proof of Proposition \ref{th3}.
$\qedhere$
\end{proof}
\subsection{Passage to the limit as $\d\rightarrow 0$}
\begin{proof}
Let $C$ be a generic constant independent of $\d$ and $u^{\d}:=(u^{i,\d})_{1\leq i\leq m}$ a solution of system (\ref{sys5}). We follow the lines of proof of Proposition \ref{th7}.\\
An upper bound on $u^{\textbf{i},\d}_t$ and estimate (\ref{2eta}) allow us to apply Simon's Lemma (Lemma 2.3), then $(u^{i,\d})_{\d}$ is relatively compact in $L^2(0,T;L^2(\Omega))$, and there exists a function $u^i\in L^2(0,T; H^1(\O))$ such that, as $\d\rightarrow 0$, we have (up to a subsequence)
$$u^{i,\d} \rightarrow u^i \mbox{ strongly in } L^2(0,T; L^2(\O)),$$
and
$$u^{i}_t={\rm div} \left\{u^{i}\overset{m}{\underset{j=1}{\sum}} A_{ij}\nabla u^{j}\right\}\quad\mbox{ in }\mathcal D'(\O_T).$$
Similarly to Step 4 of the proof of Proposition \ref{th7} the initial condition is recoverd. Also estimate (\ref{entro}) can be easily obtained.
\qedhere
\end{proof}
\begin{rem}\label{eed}\textbf{(Passage to the limit as $(\ell,\eta,\d)\rightarrow (\infty,0,0)$)}\\
It is possible to pass to the limit in system (\ref{sys4}) as $(\ell,\eta,\d)\rightarrow (\infty,0,0)$ at the same time: By using the entropy estimate (\ref{2eps}) and applying Simon's Lemma on the sequence $\rho_\eta\star u^{i,\ell}$ instead of $u^{i,\ell}$. Moreover, to get the entropy estimate (\ref{entro}) it is sufficient to use the fact that $\int_\O \Psi_{0,\ell} (\rho_\eta\star u^{i,\ell})\leq \int_\O \rho_\eta\star\Psi_{0,\ell}(u^{i,\ell})$.
\end{rem}
\section{Generalizations}\label{Gen}
\subsection{Generalization on the matrix A}\label{generalisation}
Assumption (\ref{eq+}) can be weaken. Indead, we can assume that $A=(A_{ij})_{1\leq i,j\leq m}$ is a real $m\times m$ matrix that satisfies a positivity condition, in the sense that there exist two positive definite diagonal $m\times m$ matrices $L$ and $R$ and $\delta_0>0$, such that we have
\begin{equation}\label{eq++}
\zeta^T L A R\, \zeta \geq \delta_0|\zeta|^2, \quad\mbox{for all}\quad \zeta \in \mathbb R^m.
\end{equation}
\begin{rem}(\textbf{Comments on the positivity condition (\ref{eq++})})\\
The assumption of positivity condition (\ref{eq++}), generalize our problem for $A$ not necessarily having a symmetric part positive definite. Here is an example of such a matrix, whose symmetric part is not definite positive, but the symmetric part of $L\,A\,R$ is definite positive for some suitable positive diagonal matrices $L$ and $R$.\\We consider
$$ A=\begin{pmatrix} 
1 & -a \\
2a & 1 
\end{pmatrix}
\mbox{with } \left|a\right|>2.$$
Indeed, $$A^{sym}=\frac{A^T+A}{2}=\begin{pmatrix} 
1 & \frac{a}{2} \\
\frac{a}{2} & 1 
\end{pmatrix},$$ satisfying $\displaystyle\det (A^{sym})=1-\frac{a^2}{4}<0$.
And let
$$L=\begin{pmatrix} 
2 & 0 \\
0 & 1 
\end{pmatrix}\quad\mbox{and}\quad
R=I_2=\begin{pmatrix} 
1 & 0 \\
0 & 1 
\end{pmatrix}.$$
On the other hand, $$B=L.A.R=\begin{pmatrix} 
2 & -2a \\
2a & 1 
\end{pmatrix},$$ satisfies that $$B^{sym}=\begin{pmatrix} 
2 & 0 \\
0 & 1 
\end{pmatrix},$$ is definite positive.
$\qedhere$
\end{rem}
\begin{pro}(\textbf{The case where $L=I_2$})\\
Let $A$ be a matrix that satisfies the positivity condition (\ref{eq++}) with $L=I_2$. Then $\bar{u}$ is a solution for system (\ref{sys0.1}) with the matrix $\bar{A}=A\,R$ (instead of $A$) if and only if $u^i=R_{ii}\,\bar{u}^i$ is a solution for system (\ref{sys0.1}) with the matrix $A$.
\end{pro}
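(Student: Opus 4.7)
The plan is a direct substitution argument. Since $R$ is a positive definite diagonal $m\times m$ matrix, every $R_{ii}>0$, so the correspondence $u^i=R_{ii}\bar{u}^i$ is invertible, which lets us establish the two implications in a single symmetric computation. The positivity hypothesis (\ref{eq++}) with $L=I_m$ reads $\zeta^T (AR)\zeta\geq\delta_0|\zeta|^2$, i.e.\ the matrix $\bar{A}:=AR$ satisfies (\ref{eq+}), so the reduction performed by the proposition is exactly the one that makes Theorem \ref{th0.1} applicable in the weakened setting (\ref{eq++}).

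Starting from $u=(u^i)_{1\leq i\leq m}$ assumed to solve (\ref{sys0.1}) with matrix $A$, I would substitute $u^i=R_{ii}\bar{u}^i$. Because each $R_{ii}$ is a positive constant, independent of $t$ and $x$, we have $u^i_t=R_{ii}\bar{u}^i_t$ and $\nabla u^j=R_{jj}\nabla\bar{u}^j$. Plugging these into (\ref{sys0.1}) gives
$$
R_{ii}\bar{u}^i_t \;=\; {\rm div}\!\left(R_{ii}\bar{u}^i\sum_{j=1}^m A_{ij}R_{jj}\nabla\bar{u}^j\right)
\;=\; R_{ii}\,{\rm div}\!\left(\bar{u}^i\sum_{j=1}^m (AR)_{ij}\nabla\bar{u}^j\right),
$$
where the key identity $(AR)_{ij}=A_{ij}R_{jj}$ uses precisely the diagonality of $R$. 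Dividing both sides by $R_{ii}>0$ yields (\ref{sys0.1}) for $\bar{u}$ with matrix $\bar{A}=AR$. The converse implication is the same line of equalities read from right to left: starting from the $\bar{u}$-system and multiplying the $i$-th equation by the scalar $R_{ii}$, setting $u^i=R_{ii}\bar{u}^i$ recovers (\ref{sys0.1}) with matrix $A$.

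The only small things to check are that the distributional formulation and the regularity class of Theorem \ref{th0.1} are preserved under this componentwise scaling, and that the initial data transform as $u_0^i=R_{ii}\bar{u}_0^i$, so that the entropy bound (\ref{psi0}) on $\bar{u}_0$ is equivalent (up to a constant depending only on $R$) to the same bound on $u_0$. All of this is immediate because $R$ is a constant invertible diagonal matrix. There is no genuine obstacle in the proof; it is a bookkeeping statement whose purpose is to transfer the conclusions of Theorem \ref{th0.1} to any matrix $A$ satisfying (\ref{eq++}) with $L=I_m$ by working with the equivalent unknown $\bar{u}$.
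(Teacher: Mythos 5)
Your substitution argument is correct and complete: the identity $(AR)_{ij}=A_{ij}R_{jj}$ for diagonal $R$, together with the positivity of each $R_{ii}$, makes the change of unknown $u^i=R_{ii}\bar{u}^i$ an exact, reversible rescaling of each equation, which is precisely the content of the proposition. The paper states this result without proof, and your computation is evidently the intended one; your added remarks on the transfer of the positivity condition (\ref{eq++}) to (\ref{eq+}) for $\bar{A}=AR$ and on the compatibility of the initial data correctly capture why the proposition is useful.
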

\begin{pro}(\textbf{The case where $R=I_2$})\\
Let $u^{n+1}=(u^{i,n+1})_{1\leq i\leq m}$ be a solution of system (\ref{sys3}) with a matrix $A$ satisfying the positivity condition (\ref{eq++}) with $R=I_2$ and $L$ a positive diagonal matrix. Then $u^{n+1}$ satisfies the following entropy estimate  
\begin{eqnarray*}\label{2n**}
\sum_{i=1}^m\int_{\O}L_{ii}\Psi_{\e,\ell}(u^{i,n+1}) &+&\d\Delta t\min_{1\leq i\leq m}\left\{L_{ii}\right\}\sum_{i=1}^m\sum_{k=0}^{n}\int_{\O}|\n u^{i,k+1}|^2\\
&+&\delta_0\Delta t\sum_{i=1}^m\sum_{k=0}^{n}\int_{\O}|\n \rho_{\eta}\star u^{i,k+1}|^2\le\sum_{i=1}^m \int_{\O} L_{ii}\Psi_{\e,\ell}(u^{i}_0)
\end{eqnarray*}
\end{pro}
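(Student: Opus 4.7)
The plan is to reprise Step 5 of the proof of Proposition \ref{th2}, but now test the equation against the $L_{ii}$-weighted function $L_{ii}\Psi'_{\e,\ell}(u^{i,n+1})$. By convexity of $\Psi_{\e,\ell}$ and positivity of each $L_{ii}$,
\[
\sum_i L_{ii}\int_\O \frac{\Psi_{\e,\ell}(u^{i,n+1})-\Psi_{\e,\ell}(u^{i,n})}{\Delta t}\le \sum_i L_{ii}\left\langle\frac{u^{i,n+1}-u^{i,n}}{\Delta t},\Psi'_{\e,\ell}(u^{i,n+1})\right\rangle_{H^{-1}(\O)\times H^1(\O)}.
\]
Substituting the equation (\ref{sys3}) and integrating by parts, using the identity $\Psi''_{\e,\ell}\cdot T^{\e,\ell}=1$ exactly as in the original argument, the viscous term contributes $\d L_{ii}\int_\O|\n u^{i,n+1}|^2$ while the cross-diffusion term contributes
\[
\sum_{i,j}L_{ii}A_{ij}\int_\O(\n\rho_\eta\star\rho_\eta\star u^{j,n+1})\cdot\n u^{i,n+1}.
\]

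Next, using that $\rho_\eta$ is even so that $\rho_\eta\star$ is self-adjoint on $L^2$ and commutes with $\n$, together with $L$ being diagonal so that $(LA)_{ij}=L_{ii}A_{ij}$, the cross-diffusion integral becomes
\[
\sum_{i,j}(LA)_{ij}\int_\O(\n\rho_\eta\star u^{j,n+1})\cdot(\n\rho_\eta\star u^{i,n+1}).
\]
Pointwise in $x$ and componentwise in the Cartesian indices, this is, for each Cartesian direction, the quadratic form of $LA$ applied to the $\mathbb R^m$-vector formed by $(\n\rho_\eta\star u^{i,n+1})_{1\le i\le m}$. The assumption (\ref{eq++}) with $R=I_2$ gives $\zeta^T (LA)\,\zeta\ge\d_0|\zeta|^2$, hence the cross-diffusion contribution is bounded below by $\d_0\sum_i\int_\O|\n\rho_\eta\star u^{i,n+1}|^2$.

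Collecting these estimates and bounding $L_{ii}\ge\min_{1\le k\le m} L_{kk}$ in the viscous term yields the one-step inequality
\[
\sum_i L_{ii}\int_\O\Psi_{\e,\ell}(u^{i,n+1})+\d\Delta t\min_k L_{kk}\sum_i\int_\O|\n u^{i,n+1}|^2+\d_0\Delta t\sum_i\int_\O|\n\rho_\eta\star u^{i,n+1}|^2\le\sum_i L_{ii}\int_\O\Psi_{\e,\ell}(u^{i,n}),
\]
and a straightforward induction on $n$ then delivers the weighted entropy estimate as stated. There is no substantive obstacle: the only delicate bookkeeping is the self-adjointness and commutation trick on the mollifier that exposes the matrix $LA$ in the quadratic form, but this is exactly the manipulation already used in Step 5 of the proof of Proposition \ref{th2}.
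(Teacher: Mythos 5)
Your proposal is correct and follows essentially the same route as the paper: weight the entropy test function by $L_{ii}$, transpose the mollifier using the evenness of $\rho_\eta$ to expose the quadratic form of $LA$ in the vector $(\partial_k\rho_\eta\star u^{i,n+1})_{1\le i\le m}$ for each Cartesian direction, apply the positivity condition (\ref{eq++}) with $R=I_2$, and bound $L_{ii}$ below by its minimum in the viscous term before concluding by recurrence. No substantive difference from the paper's argument.
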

\begin{proof}
We have that (from fifth line of the computation in Step 5 of the proof of Proposition \ref{th2})
\begin{eqnarray*}
\sum_{i=1}^m\int_{\O}L_{ii}\left(\frac{\Psi_{\e,\ell}(u^{i,n+1})-\Psi_{\e,\ell}(u^{i,n})}{\Delta t}\right)
&\leq& -\int_{\O}\si\sj L_{ii} A_{ij}(\n\rho_\eta\star\rho_\eta\star u^{j,n+1})\cdot\n u^{i,n+1}\\
&&-\d\si\int_\O L_{ii} |\n u^{i,n+1}|^2\\
&\leq& \int_{\O}\si\sj(\n\rho_\eta\star u^{j,n+1}) L_{ii}A_{ij}(\n\rho_\eta\star u^{i,n+1})\\
&&-\d \si\int_\O L_{ii}|\n u^{i,n+1}|^2\\
&\leq& -\d_0\int_{\O}\si|\n\rho_{\eta}\star u^{i,n+1}|^2\\
&&-\d \min_{1\leq i\leq m}\left\{L_{ii}\right\} \si\int_\O |\n u^{i,n+1}|^2,
\end{eqnarray*}
where we have used, in the last line, the fact that the matrix $A$ satisfies (\ref{eq++}) with $R=I_2$. Then by a straightforward recurrence we get (\ref{2n**}).
$\qedhere$ 
\end{proof}
\begin{cor}
Theorem \ref{th0.1} still hold true if we replace condition (\ref{eq+}) by condition (\ref{eq++}).
\end{cor}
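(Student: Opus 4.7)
The plan is to combine the two propositions of this subsection: first reduce to the case $R = I_m$ via the change of variables proposition, then rerun the construction of Section 3 using the weighted entropy estimate which is available in that restricted case. Concretely, given $A$ satisfying (\ref{eq++}) with positive diagonal matrices $L$ and $R$, I would set $\bar A := A R$. Then $\zeta^T L \bar A \zeta = \zeta^T L A R \zeta \geq \delta_0 |\zeta|^2$, so $\bar A$ satisfies (\ref{eq++}) with the same $L$ and with $R$ replaced by the identity.

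I would then establish existence for system (\ref{sys0.1}) with matrix $\bar A$ in place of $A$, by adapting the full machinery of Section 3 with the weighted entropy $\sum_i L_{ii}\Psi(\bar u^i)$ in place of $\sum_i \Psi(u^i)$. At the fully discretized level, the required inequality is exactly the content of the previous proposition (the $R = I_m$ case), which gives
$$\sum_i\int_\Omega L_{ii}\Psi_{\varepsilon,\ell}(u^{i,n+1}) + \delta\Delta t\min_i\{L_{ii}\}\sum_{i,k}\int_\Omega|\nabla u^{i,k+1}|^2+\delta_0\Delta t\sum_{i,k}\int_\Omega|\nabla\rho_\eta\star u^{i,k+1}|^2\le\sum_i\int_\Omega L_{ii}\Psi_{\varepsilon,\ell}(u_0^i).$$
Setting $c := \min_i L_{ii} > 0$ and $C := \max_i L_{ii}$, this weighted inequality yields precisely the same qualitative a priori bounds as in Proposition \ref{th2}, up to the harmless multiplicative constants $c$ and $C$. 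The three successive passages to the limit, $(\Delta t,\varepsilon)\to(0,0)$, $(\ell,\eta)\to(\infty,0)$, and $\delta\to 0$, performed in Propositions \ref{th3}, \ref{th7} and Subsection 3.5, then adapt verbatim: each appeal to the entropy to control $\|\nabla u^{i}\|_{L^2}$, non-negativity of $u^i$, or the time-regularity needed for Simon's lemma, is replaced by its weighted counterpart and the positive constants $c, C$ only enter multiplicatively.

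Once the existence of $\bar u = (\bar u^i)_i$ for the system with matrix $\bar A$ is established, I would invoke the first proposition of the subsection (the $L = I_m$ equivalence) to conclude: the function $u^i := R_{ii}\bar u^i$ solves (\ref{sys0.1}) with matrix $A$. Non-negativity of $u^i$ follows from that of $\bar u^i$ together with $R_{ii} > 0$, and the entropy estimate (\ref{entro}) for $u$ is inherited from the weighted analog for $\bar u$, absorbing the constants $R_{ii}$ into the entropy weights.

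The main obstacle, and essentially the only non-routine point, is verifying that the weighted entropy indeed controls the same quantities as the unweighted one at every step of Section 3, in particular at the points where one uses weak lower semicontinuity of the $L^2$-norm under the passage $(\Delta t, \varepsilon)\to(0,0)$ and the termwise non-negativity argument $(U^i)^- = 0$ in Step 7 of Proposition \ref{th3}. Because the weights are a fixed finite family of strictly positive constants, both arguments carry over by extracting $\min_i L_{ii}$ or bounding one term of the sum by the whole weighted sum; no further ideas are needed.
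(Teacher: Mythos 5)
Your proposal is correct and follows exactly the route the paper intends: the corollary is stated without an explicit proof precisely because it is the combination of the two preceding propositions, namely the rescaling $u^i = R_{ii}\bar u^i$ reducing to the matrix $\bar A = AR$ (which satisfies (\ref{eq++}) with $R=I$), followed by rerunning Section 3 with the weighted entropy $\sum_i L_{ii}\Psi_{\e,\ell}$. Your remarks that the weights only enter as harmless multiplicative constants in the a priori bounds and limit passages are the right (and only) points to check.
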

\subsection{Generalisation on the problem}
\subsubsection{{The tensor case}}
Our study can be applied on a generalized systems of the form
\begin{equation}\label{gen}
u^i_t=\sum_{j=1}^m\sum_{k=1}^N\sum_{l=1}^N\frac{\partial}{\partial x_k}\left(f_i(u^i)A_{ijkl}\frac{\partial u^j}{\partial x_l}\right)\quad\mbox{ for } i=1,\ldots,m,
\end{equation}
where $f_i$ satisfies
\begin{eqnarray*}
\left\{\begin{array}{ll}
f_i\in C(\mathbb R),&\\\\
0\leq f_i(a)\leq {C}(1+|a|) &{\quad\mbox{ for } a \in \mathbb R \mbox{ and } C>0,}\\\\
{c\left|a\right|\leq} f_i(a)&\quad\mbox{ for } a \in[0,a_0]\quad\mbox{with } a_0,c>0,\\\\
\displaystyle\int_{a_0}^A \frac{1}{f_i(a)}\,da<+\infty&\quad\mbox{ for all } A\geq a_0.
\end{array}\right.
\end{eqnarray*}
An example for such $f_i$ is
$$f_i(a)=\max\left(0,\min\left(a,\sqrt{\left|a-1\right|}\right)\right).$$
Moreover, $A=(A_{ijkl})_{i,j,k,l}$ is a tensor of order 4 that satisfies the following positivity condition: there exists $\d_0>0$ such that
\begin{equation}
\sum_{i,j,k,l} A_{ijkl}\,\eta^i\,\eta^j\,\zeta_k\,\zeta_l\,\geq\delta_0|\eta|^2|\zeta|^2 \quad\mbox{ for all } \eta\in\mathbb R^m, \zeta\in\mathbb R^N.
\end{equation}
The entropy function $\Psi_i$ is chosen such that $\Psi_i$ is nonnegative, lower semi-continuous, convex and satisfies that $\Psi_i''(a)=\displaystyle\frac{1}{f_i(a)}$ for $i=1,\ldots,m$.
Our solution satisfies the following entropy estimate for a.e. $t>0$
\begin{equation}\label{entro2}
\sum_{i=1}^m\int_{\O}{\Psi}_i(u^i(t))+\d_0\sum_{i=1}^m\int_0^{t}\int_{\O}|\nabla u^i|^2\le\sum_{i=1}^m\int_{\O}{\Psi}_i(u_0^i).
\end{equation}
To get this entropy we can apply the same strategy announced in Subsection \ref{strategy} where $f_i(u^i)$ will be replaced by $T^{\e,\ell}(f_i(v^i))$ with $T^{\e,\ell}$ given in (\ref{T}) and we use the fact that
\begin{eqnarray*}
\int_\O\sum_{i,j,k,l} \frac{\partial u^i}{\partial x_k} A_{ijkl} \frac{\partial u^{j}}{\partial x_l}&=& \sum_{n\in\mathbb{Z}^N}\sum_{i,j,k,l} \overline{\widehat{\left(\frac{\partial u^i}{\partial x_k}\right)}}(n) A_{ijkl} \widehat{\left(\frac{\partial u^{j}}{\partial x_l}\right)}(n)\\
&=&\sum_{n\in\mathbb{Z}^N}\sum_{i,j,k,l} n_k\, \widehat{u^i}(n)\, A_{ijkl}\, n_l\, \widehat{u^j}(n)\\
&\geq&\ \d_0 \sum_{n\in\mathbb{Z}^N}\left|n\right|^2 \left|\widehat{u}\right|^2=\d_0\left\|\n u\right\|_{(L^2(\O))^m}^2.
\end{eqnarray*}
\subsubsection{The variables coefficients case}
Here the coefficients $A_{ij}(x,u)$ may depend continuously of $(x,u)$. Then we have to take $\rho_\eta \star (A_{ij}(x,u) (\n\rho_\eta \star u^j))$ instead of $A_{ij} \nabla (\rho_\eta \star \rho_\eta \star u^j)$ in the approximate problem. We can consider a problem
$$ u^i_t={\rm div} \left(u^i\sj A_{ij}(x,u)\n u\right)+g^i(x,u), \quad\mbox{with } g^i\geq 0,$$
where the source terms are not too large as $u$ goes to infinity.
\subsubsection{Laplace-type equations}
Moreover, our method applies to models of the form 
\begin{equation}\label{form1}
u^i_t=\Delta (a_i(u)u^i) \quad\mbox{with } u=(u^i)_{1\leq i\leq m},
\end{equation}
under these assumptions:
\begin{eqnarray}
\left\{\begin{array}{cll}
&a_i(u)\geq 0 \quad\mbox{if}\quad u^j\geq 0 \quad\mbox{for } j=1,\ldots,m,\\
&a_i \quad\mbox{is sublinear},\\
&a_i \in C^1(\mathbb R),\\
&\displaystyle Sym\left(\left(\frac{\partial a_i}{\partial u_j}\right)_{i,j}\right)\geq \d_0 I\quad\mbox{with}\quad \d_0 >0,
\end{array}\right.
\end{eqnarray}
where $Sym$ denotes the symmetric part of a matrix.
We can consider a particular case of (\ref{form1}) where $\displaystyle a_i(u)=\sj A_{ij} u^j$. Then problem (\ref{form1}) can be written as
 \begin{equation}\label{form3}
u^i_t={\rm div} \left\{u^i\sj A_{ij} \n u^j+\left(\sj A_{ij} u_j\right) \n u^i\right\},
\end{equation}
which can be also solved under these assumptions:
\begin{eqnarray*}
\left\{\begin{array}{cll}
&A_{ij}\geq 0 \quad\mbox{for}\quad i,j=1,\ldots,m,\\
&Sym(A)\geq \d_0 I.
\end{array}\right.
\end{eqnarray*}
\subsubsection{Diffusion matrix}
We can consider the model 
\begin{equation}\label{diffusion}
u^i_t=div \left(\sum_{j=1}^m\sum_{k=1}^m B_{ijk} u^j \nabla u^k\right),\quad\mbox{ for } i=1,\ldots,m.
\end{equation}
where $Sym\left((\sum_{j=1}^m B_{ijk})_{i,k}\right)\geq \d_0I$.
\section{Appendix: Technical results}
In this section we will present some technical results that are used in our proofs. 
\begin{pro}\label{initial}(\textbf{Recovering the initial condition})\\
Let Y be a Banach space with the norm $\left\|.\right\|_Y$. Consider a sequence $(g_m)_m\in C(0,T; Y)$ such
that $(g_m)_t$ is uniformly bounded in $L^q(0,T; Y)$ with $1< q\leq \infty$, and $(g_m)_{|t=0}\rightarrow g_0$ in $Y$. Then there exists $g\in C(0,T; Y)$ such that $g_m\rightarrow g$ in $C(0,T; Y)$ and   
$$ g_{|t=0}=g_0\qquad \mbox{in } Y.$$
\end{pro}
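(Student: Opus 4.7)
The plan is to exploit the time-derivative bound to obtain uniform equicontinuity of the sequence in time, and then combine this with the given convergence at $t=0$ to identify the initial trace of the limit.

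First, I would show that $(g_m)_m$ is uniformly equicontinuous from $[0,T]$ into $Y$. For any $0\le s<t\le T$, writing the Bochner identity $g_m(t)-g_m(s)=\int_s^t (g_m)_\tau(\tau)\,d\tau$ in $Y$ and applying H\"older's inequality with the conjugate exponent $q'=q/(q-1)$ (well-defined since $q>1$), one obtains
\begin{equation*}
\|g_m(t)-g_m(s)\|_Y \,\le\, \int_s^t \|(g_m)_t(\tau)\|_Y\,d\tau \,\le\, |t-s|^{1/q'}\,\|(g_m)_t\|_{L^q(0,T;Y)} \,\le\, C\,|t-s|^{1/q'},
\end{equation*}
with $C$ independent of $m$. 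Hence the $g_m$ are uniformly H\"older continuous with common exponent $1/q'$ (and simply Lipschitz if $q=\infty$). Combined with $g_m(0)\to g_0$ in $Y$, this also gives the uniform bound $\|g_m(t)\|_Y\le \|g_m(0)\|_Y+C\,T^{1/q'}\le C'$.

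The next step is to extract a limit $g\in C([0,T];Y)$ with $g_m\to g$ in $C([0,T];Y)$. In the applications where this proposition is invoked (after Aubin--Simon or Lions--Magenes type arguments), pointwise precompactness in $Y$ is available, so an Ascoli--Arzel\`a argument combined with the equicontinuity above yields, up to extraction, uniform convergence $g_m\to g$ in $C([0,T];Y)$, and the limit inherits the uniform H\"older bound, hence belongs to $C([0,T];Y)$.

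Finally, to identify $g(0)=g_0$: uniform convergence on $[0,T]$ forces pointwise convergence at $t=0$, that is $g_m(0)\to g(0)$ in $Y$; together with the assumption $g_m(0)\to g_0$ in $Y$, uniqueness of limits in the Banach space $Y$ gives $g(0)=g_0$. The main obstacle is the extraction of $g$ in $C([0,T];Y)$, since the hypotheses as stated do not by themselves furnish pointwise precompactness for $t>0$; in the concrete applications within this paper, however, the convergence has already been secured by previous compactness steps, so that the true content of the proposition is the identification of the trace at $t=0$, which follows immediately by passing to the limit in $\|g_m(t)-g_m(0)\|_Y\le C\,t^{1/q'}$ and using $g_m(0)\to g_0$.
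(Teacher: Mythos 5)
Your proposal follows essentially the same route as the paper: equicontinuity via the Bochner integral and H\"older's inequality, Arzel\`a--Ascoli to obtain $g_m\to g$ in $C(0,T;Y)$, and identification of the trace by passing to the limit in $\|g_m(t)-g_m(0)\|_Y\le C\,t^{(q-1)/q}$. Your observation that Arzel\`a--Ascoli additionally requires pointwise relative compactness in $Y$ (not supplied by the stated hypotheses, but available in the paper's applications) is accurate --- the paper's own proof silently skips this point --- so your version is, if anything, the more carefully stated one.
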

\begin{proof}
We have that for all $s<t\in (0,T)$
\begin{eqnarray}\label{fin}
\|g_m(t)-g_m(s)\|_{Y}&=& \left\|\int_s^t(g_m)_\tau(\tau)\right\|_Y\nonumber\\
&\le & \int_s^t \|(g_m)_\tau(\tau)\|_{Y}\;ds\nonumber\\
&\le & (t-s)^{\frac{q-1}{q}}\;\|(g_m)_\tau(\tau)\|_{L^q(0,T;Y)}\leq (t-s)^{\frac{q-1}{q}} C ,
\end{eqnarray}
where we have used in the second line Holder's inequality, and the fact that $(g_m)_\tau$ is uniformly bounded in
$L^q(0,T;Y)$. Since (\ref{fin}) implies the equicontinuity of $(g_m)_m$, by Arzel\`a-Ascoli theorem, there exists $g\in C(0,T; Y)$ such that $g_m\rightarrow g$ in $C(0,T; Y)$.
Moreover, Taking $s=0$ in (\ref{fin}) we get
\begin{equation}\label{ineqdual-1}
\|g_m(t)-g_m(0)\|_{Y} \le  t^{\frac{q-1}{q}} C.
\end{equation}
By passing to the limit in $m$ in (\ref{ineqdual-1}), we deduce that
\begin{eqnarray*}
\|g(t)-g_0\|_{Y}\le t^{\frac{q-1}{q}} C
\end{eqnarray*}
Particularly, for $t=0$, we have
\begin{eqnarray*}
\|g(0)-g_0\|_{Y}=0.
\end{eqnarray*}
This implies the result.  
$\qedhere$
\end{proof}
\begin{lem}\label{conv}(\textbf{Convergence result})\\
Let $(a_\e)_\e$ a real sequence such that $a_\e \rightarrow a_0$ as $\e\rightarrow 0$. Then we have 
$$\Psi_{0,\ell}(a_0)\leq \liminf_{\e\rightarrow 0} \Psi_{\e,\ell}(a_\e),$$ where $\Psi_{\e,\ell}$ and ${\Psi_{0,\ell}}$ are given in (\ref{psi2}) and (\ref{psil}) respectively. 
\end{lem}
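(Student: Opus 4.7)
The plan is to argue by case analysis on the sign of the limit $a_0$. Extracting a subsequence (still denoted by $\e$) along which $\Psi_{\e,\ell}(a_\e)$ tends to its $\liminf$, I split into three cases according to whether $a_0>0$, $a_0<0$, or $a_0=0$.

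If $a_0 > 0$, then for $\e$ small enough $a_\e > a_0/2 > \e$. On $(\e,+\infty)$ the definitions \eqref{psi2} and \eqref{psil} agree (both give $1/e + a\ln a$ on $(\e,\ell]$ and the same quadratic tail beyond $\ell$), so $\Psi_{\e,\ell}(a_\e) = \Psi_{0,\ell}(a_\e)$ eventually, and continuity of $\Psi_{0,\ell}$ at $a_0$ yields the conclusion. If $a_0 < 0$, then $\Psi_{0,\ell}(a_0) = +\infty$ and we must show divergence of the right-hand side. For $\e$ small one has $a_\e < 0 \leq \e$, so the first branch of \eqref{psi2} applies and
$$
\Psi_{\e,\ell}(a_\e) = \frac{1}{e} + \frac{a_\e^2}{2\e} + a_\e \ln\e - \frac{\e}{2}.
$$
Both $a_\e^2/(2\e) \to +\infty$ (since $a_\e^2\to a_0^2 > 0$) and $a_\e\ln\e \to +\infty$ (a negative quantity times $-\infty$), so the $\liminf$ is $+\infty$.

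The delicate case is $a_0 = 0$, where $\Psi_{0,\ell}(0) = 1/e$ and neither $a_\e$ nor $\e$ need dominate the other. I extract a further subsequence and distinguish $a_\e > \e$ from $a_\e \leq \e$. In the first regime, eventually $a_\e \in (\e, \ell]$, so $\Psi_{\e,\ell}(a_\e) = 1/e + a_\e\ln a_\e \to 1/e$ because $x\ln x \to 0$ as $x \to 0^+$. In the second regime, I minimize the quadratic $x \mapsto x^2/(2\e) + x\ln\e$ over $x \in \mathbb{R}$: the optimum is attained at $x = -\e\ln\e$ with minimum value $-\e(\ln\e)^2/2$. Hence
$$
\Psi_{\e,\ell}(a_\e) \geq \frac{1}{e} - \frac{\e(\ln\e)^2}{2} - \frac{\e}{2} \longrightarrow \frac{1}{e} = \Psi_{0,\ell}(0),
$$
since $\e(\ln\e)^2 \to 0$. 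The main (mild) obstacle is precisely this $a_0 = 0$ case, where the quadratic branch has no definite sign and one must use the explicit minimum to obtain a uniform lower bound; the other two cases follow by direct inspection.
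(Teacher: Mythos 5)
Your proof is correct, and its overall skeleton (case analysis on the sign of $a_0$, with $a_0=0$ as the only delicate case) coincides with the paper's; the difference lies in how the crucial subcase $a_0=0$, $a_\e\le\e$ is treated. The paper argues by monotonicity: since $\Psi_{\e,\ell}$ is decreasing on $(-\infty,1/e]$ (for $\e<1/e$), it bounds $\Psi_{\e,\ell}(a_\e)$ from below by $\Psi_{\e,\ell}(b_\e)$ for a comparison sequence $b_\e\ge a_\e$ decreasing to $0$ slowly enough to eventually fall in the branch $(\e,\ell]$, where $\Psi_{\e,\ell}(b_\e)=\frac{1}{e}+b_\e\ln b_\e\to\frac{1}{e}$. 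You instead minimize the quadratic branch explicitly, obtaining the uniform lower bound $\Psi_{\e,\ell}\ge\frac{1}{e}-\frac{\e(\ln\e)^2}{2}-\frac{\e}{2}$ on $(-\infty,\e]$, which tends to $\frac{1}{e}=\Psi_{0,\ell}(0)$. Your route is more quantitative and self-contained: it avoids constructing and verifying the comparison sequence (a point the paper leaves implicit, and where its statement ``$\Psi_{\e,\ell}(b_\e)\to 0$'' is in fact off by the additive constant $\frac{1}{e}$), and it makes transparent the rate at which the approximate entropies can dip below their limit. The paper's argument, in exchange, requires no computation beyond checking monotonicity. You also write out the cases $a_0>0$ and $a_0<0$, which the paper dismisses as ``easily obtained''; your verification there is correct, including the observation that in the case $a_0<0$ both divergent terms $\frac{a_\e^2}{2\e}$ and $a_\e\ln\e$ have the same sign, so no cancellation occurs.
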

\begin{proof}
Consider the case where $a_0=0$.\\
We suppose that the sequence $(a_\e)_\e\in (-\infty;\frac{1}{e}]$. Let $(b_\e)_\e\in (-\infty;\frac{1}{e}]$ a sequence that decreases to $0$ as $\e\rightarrow 0$ with $b_\e >a_\e$. Since $\Psi_{\e,\ell}$ is decreasing on $(-\infty;\frac{1}{e}]$ we have $\Psi_{\e,\ell}(a_\e)\geq \Psi_{\e,\ell}(b_\e)$. Moreover, using the fact that $\Psi_{\e,\ell}(b_\e)\rightarrow 0=\Psi_{0,\ell}(0)$ we get the result.\\
Otherwise, when $(a_\e)_\e\in (\frac{1}{e};+\infty)$ the proof is the same as above but with taking $b_\e<a_\e$ since $\Psi_{\e,\ell}$ is nondecreasing in $(\frac{1}{e};+\infty)$.\\
For the other cases, $a_0<0$ and $a_0>0$, the result is easily obtained.
$\qedhere$
\end{proof}
\subsection*{Acknowledgments}
R.M. thanks his grant SAAW.


\begin{thebibliography}{SK}


\normalsize
\baselineskip=17pt

\bibitem{adams}
\textsc{Adams, R}, Sobolev Spaces, \emph{Academic Press}, New York, (1975).



\bibitem{AL}
\textsc{Alt, W. \& Luckhaus, S.},
Quasilinear elliptic-parabolic differential equations,
\emph{Math.Z.}, \textbf{183}, (1983), 311-341.



\bibitem{BCSOH}
\textsc{Bear, J., Cheng, A.H.-D., Sorek, S., Ouazar, D. \& Herrera, I. (Eds.)}, Seawater Intrusion in Coastal Aquifers - Concepts, Methods and Practices, \emph{Theory and applications of transport in porous media 14}, Kluwer Academic Publisher, Netherlands, (1999).




\bibitem{Brezis}
\textsc{Brezis, H.}, Analyse fonctionnelle, Th\'eorie et
applications. \emph{Dunod}, Paris, (1999).




\bibitem{CJ2}
\textsc{Chen, L. \& J\"{u}ngel, A.}, Analysis of a parabolic
cross-diffusion population model without self-diffusion, \emph{Journal of
Differential Equations}, \textbf{224} (1), (2006), 39-59.

\bibitem{CJ3}
\textsc{Chen, L., J\"{u}ngel, A.}, Analysis of multidimensional
parabolic population model  with strong cross-diffusion, \emph{SIAM J. Math. Anal.}, \textbf{36} (1), (2004), 301-322.


\bibitem{CHL}
\textsc{Choi, Y.S., Huan, Z. \& Lui, R.}, Global existence of
solutions of a strongly coupled quasilinear parabolic system with
applications to electrochemistry, \emph{Journal of Differential
Equations}, \textbf{194} (2), (2003), 406-432.











\bibitem{CPZ}
 \textsc{Corrias, L., Perthame, B., \& Zaag, H.}, A chemotaxis model motivated by angiogenesis, \emph{C. R. Acad. Sci.}, Paris, S\'er. 1, Math. \textbf{336} (2003), 141-146. 


\bibitem{ELM}
\textsc{Escher, J., Lauren\c{c}ot, Ph. \& Matioc, B.-V.}, Existence and stability of weak solutions for a degenerate parabolic system modelling
two-phase flows in porous media, \emph{Ann. Inst. H. Poincar\'e Anal. Non Lin\'eaire} \textbf{28} (4) (2011), 583-598.


\bibitem{EMM}
\textsc{Escher, J., Matioc, A.–V. \& Matioc, }, Modelling and analysis of the Muskat problem for thin fluid layers, \emph{J. Math. Fluid Mech.} \textbf{14} (2) (2012), 267-277. 

\bibitem{EM}
\textsc{Escher, J. \& Matioc, B.–V.}, Existence and stability of solutions for a strongly coupled systems modelling thin fluid
threads, \emph{NoDEA Nonlinear Differential Equations Appl.}, \textbf{20} (2013), 539-555. 




\bibitem{Evans}
\textsc{Evans, L.C.}, {\it Partial differential equations}, (1998).





\bibitem{GG}
\textsc{Galiano, G., Garzon, M.L. \& J\"{u}ngel, A.},
Semi-discretization in time and numerical convergence of solutions
of a nonlinear cross-diffusion population model, \emph{Numer. Math.} \textbf{93} (4), (2003), 655-673.





\bibitem{HP}
\textsc{Hilen, T. \& Painter, K.}, Global existence for a
parabolic chemotaxis model with prevention of overcrowding,
\emph{Advances in Applied Mathematics}, \textbf{26} (4), (2001), 280-301.

\bibitem{K}
\textsc{Kim, J.U.}, Smooth solutions to a quasi-linear system of
diffusion equations for a certain population model,\emph{Nonlinear
Analysis: Theory, Methods \& Applications}, \textbf{8} (10), (1984), 1121-1144.

\bibitem{RJ}
\textsc{Jazar, M. \& Monneau, R.}, Derivation of seawater intrusion models by formal asymptotics, \emph{SIAM J. Appl. Math.}, 74 (2014), no.4, 1152-1173.


\bibitem{LM1}
\textsc{Lauren\c{c}ot, Ph. \& Matioc, B.-V.}, A gradient flow approach to a thin film approximation of the Muskat problem, \emph{Calc. Var.
Partial Differential Equations}, \textbf{47} (2013), 319-341. 

\bibitem{LM2}
\textsc {Lauren\c{c}ot, Ph. \& Matioc, B.-V.}, A thin film approximation of the Muskat problem with gravity and capillary forces, \emph{J.
Math. Soc. Japan.}  to appear. arXiv:1206.5600



\bibitem{LS}
\textsc{Lin, C.C. \& Segel, L.A.},  Mathematics Applied to
Deterministic Problems in the Natural Sciences,\emph{Macmillan, New
York, (1974).}


\bibitem{cont}
\textsc{Lions, J.L. \& Magenes, E.}, Probl\`emes aux limites non
homog\`enes, \emph{1. Dunod}, Paris, (1968).



\bibitem{LMW} 
\textsc{Lou, Y. \& Ni, W.M.},
Diffusion, self-diffusion and cross-diffusion system, \emph{Journal of
Differential Equations}, \textbf{131} (1), (1996), 79-131.


\bibitem{LNW}
\textsc{Lou, Y., Ni, W-M. \& Wu, Y.}, On the global
existence of a cross-diffusion system, \emph{Discrete and Continuous
Dynamical Systems}, \textbf{4} (2), April, (1998) 193-204.



\bibitem{M}
\textsc{Matioc, B.-V.}, Nonnegative global weak solutions for a degenerate parabolic system modeling thin films
driven by capillarity, \emph{Proc. Roy. Soc. Edinburgh Sect. A} 142 A (2012), 1071-1085.



\bibitem{NR}
\textsc{Najib, K. \& Rosier, C.}, On the global
existence for a degenerate elliptic-parabolic seawater intrusion problem, \emph{Mathematics and Computers in Simulation}, \textbf{81}, (2011), 2282-2295.


\bibitem{PT}
\textsc{Pozio, M.A. \& Tesei, A.}, Global existence of
solutions for a strongly coupled
 quasilinear parabolic system, \emph{Nonlinear Analysis: Theory, Methods \& Applications}, \textbf{14} (8), (1990), 657-689.


\bibitem{R}
\textsc{Redlinger, R.}, Existence of the global attractor for a
strongly coupled parabolic system arising in population dynamics,
\emph{Journal of differential equations}, \textbf{118} (2), (1995), 219-252.


\bibitem{SKT}
\textsc{Shigesada, N., Kawasaki, K. \& Teramoto, E. }, Spatial
segregation of interacting species, \emph{Journal of Theoretical Biology}, \textbf{79} (1), (1979), 83-99.



\bibitem{S}
\textsc{Shim, S.}, Uniform boundedness and convergence of solutions to the systems with cross-diffusions dominated by self-diffusions, \emph{Nonlinear Anal.: RWA} \textbf{4} (2003) 65-86.






\bibitem{Simon}
\textsc{Simon, J.}, Compact sets in the space $L^p(0,T,B)$,
\emph{Annali di Matematica pura ed applicata}, \textbf{146} (1), (1986), 65-96.










\bibitem{Y}
\textsc{Yagi, A.}, Global solution to some quasilinear parabolic
system in population dynamics, \emph{Nonlinear Analysis: Theory, Methods
\& Applications}, \textbf{21} (8), (1993), 603-630.


\bibitem{WF} \textsc{Wen, Z. \& Fu, S.}, Global
solutions to a class of multi-species reaction-diffusion systems
with cross-diffusions arising in population dynamics, \emph{Journal of
Computational and Applied Mathematics}, \textbf{230} (1), (2009), 34-43.



\end{thebibliography}
\end{document}